\theoremstyle{plain}
\newtheorem{theorem}{Theorem}[section]
\newtheorem*{theorem*}{Theorem}
\newtheorem{corollary}[theorem]{Corollary}
\newtheorem*{corollary*}{Corollary}
\newtheorem{lemma}[theorem]{Lemma}
\newtheorem{proposition}[theorem]{Proposition}
\theoremstyle{definition}
\newtheorem{remark}[theorem]{Remark}
\newtheorem{definition}[theorem]{Definition}
\newtheorem*{definition*}{Definition}
\newcommand{\ZI}{\mathbb{Z}}
\newcommand{\QI}{\mathbb{Q}}
\DeclareMathOperator{\del}{\partial}
\DeclareMathOperator{\Isom}{\mathrm{Isom}}
\DeclareMathOperator{\Aut}{\mathrm{Aut}}
\DeclareMathOperator{\inj}{\hookrightarrow}
\DeclareMathOperator{\surj}{\twoheadrightarrow}
\DeclareMathOperator{\val}{\mathrm{val}}
\newcommand{\Bord}{\mathrm{Bord}}
\newcommand{\PSL}{\mathrm{PSL}}
\DeclareMathOperator{\rk}{\mathrm{rk}}
\newcommand{\sq}{$\frac 7 4$}
\renewcommand{\th}{$\frac 3 2$}
\newcommand{\ts}{\textsection}
\author{Sylvain Barr\'e}
\author{Mika\"el Pichot}
\address{Sylvain Barr\'e, UMR 6205, LMBA,
Université de Bretagne-Sud,
BP 573,
56017, Vannes, France}
\email{Sylvain.Barre@univ-ubs.fr}
\address{Mika\"el Pichot, McGill University, 805 Sherbrooke St W., Montr\'eal, QC H3A 0B9, Canada}
\title{Isomorphisms and parity of complexes of rank \sq}
\email{mikael.pichot@mcgill.ca}
\begin{document}
\begin{abstract}
We study the isomorphism types of simply connected complexes of rank \sq\ using a local invariant called the parity. We show that the parity can be computed explicitly in certain constructions arising from surgery.  
\end{abstract}

\maketitle

\section{Introduction}

A complex of rank \sq\ is a 2-complex with triangle faces, whose links are isomorphic to the Moebius-Kantor graph.

\begin{figure}[H]
 \includegraphics[width=4cm]{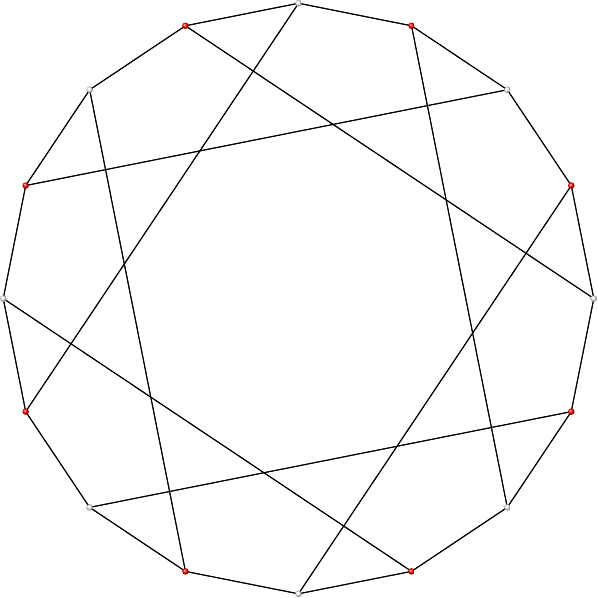} 
 \caption{The Moebius--Kantor graph}
 \end{figure}

\noindent In this paper we  continue our investigations of complexes of rank \sq, which was begun in  \cite{rd}. We study the isomorphism type of simply connected complexes of rank \sq, and aim to prove that 
\begin{enumerate}
\item[] (a) there are infinitely isomorphism types with a cocompact automorphism group
\item[] (b) there are uncountably isomorphism types, in general. 
\end{enumerate}

These statements should be compared to similar ones, for Euclidean Tits buildings, especially in the rank 2 case.

The second statement (b) is reminiscent of  the ``free constructions'' of Ronan and Tits in  \cite{tits1981local,ronan1986construction,ronan1987building} for buildings of type $\tilde A_2$, which can be used to show the existence of uncountably many such buildings.
 The main tool to establish free constructions is a prescription theorem of a local invariant of the complex. For buildings of type $\tilde A_2$, Ronan  has proved  such a theorem in \cite{ronan1986construction}, showing that one can prescribe the residues at vertices arbitrarily, which can be provided by any set of projective planes of fixed order $q$.  

Furthermore, in the case $q=2$, Tits has proved in \cite{tits1988spheres} that there are precisely two isomorphism types of spheres of radius 2, and these spheres of radius 2 can also be used as a local invariant by \cite{barre2000immeubles}.  

The use of  spheres of radius 2 is not very practical in our situation. Our results will in fact imply that there are at least 174163 isomorphism types of such spheres. We shall  rely instead on a local invariant which makes use of the intermediate rank properties of complexes of rank \sq, and in particular, how the roots of rank $2$ (and those of rank $\neq 2$) are organized in the complex. The invariant is defined in \ts\ref{S - rank parity} and called the parity.  A free prescription theorem for the parity  is established in \ts\ref{S- Prescription}.

The first statement (a) is also true for Euclidean buildings. A way to prove it, which is in fact the only way we know of, even in the rank 2 case, is to use Euclidean buildings associated with non isomorphic local fields. For example, if $(K_r)_r$ is a sequence of totally ramified finite extensions of $\QI_p$ which are pairwise nonisomorphic, then (by the classical results of Tits \cite{tits1974buildings}) the Euclidean Tits buildings $X_r$ of $\PSL_d(K_r)$ will be also pairwise nonisomorphic.  

For complexes of rank \sq, constructions using local fields are not available. However, an alternative approach for building such complexes was proposed in \cite{surgery},  using surgery theory. This construction provides compact complexes of rank \sq, and therefore simply connected ones with cocompact automorphism groups, taking universal covers. The question addressed in (a), which was raised in a recent talk by one of us, is to distinguish these complexes up to isomorphism. The problem is that different constructions may lead to isomorphic universal covers.
In \ts\ref{S - surgery construtions}, we show that the parity can be computed explicitly in these constructions, and that it provides a way to distinguish these complexes up to isomorphism. 

Note that infinitely many of the Euclidean buildings $X_r$ defined in the previous paragraph coincide on arbitrary large balls, since there are only finitely many isomorphism types of balls of radius $n$ in this case. In order to prove (a), we must in fact show that a similar construction, where the complexes coincide on arbitrarily large balls, can be done in the case of complexes of rank \sq\ (with cocompact automorphism group). This is because the existence of infinitely many isomorphism types of complexes of rank \sq\ with cocompact automorphism group, as stated in (a), implies the same statement where furthermore, the complexes coincide on arbitrary large finite balls. (Another way to phrase this is to use the space of simply connected complexes of rank \sq, an analog to the space of triangle buildings $E_q$ of \cite{E1,E2}, which is  a compact space.) Therefore, we might as well look directly for such complexes, which is what we shall do. 

We note however that, for complexes of rank \sq, the quotient by the automorphism group in (a) has to be arbitrary large, because the automorphism group is uniformly discrete. An analogous statement for buildings of type $\tilde A_2$ is not known to hold: questions regarding the existence or number of buildings, whose quotient by their automorphism group has a prescribed, e.g., arbitrarily large,  number of vertices, are generally open. This is one of our motivations  to look for new constructions techniques. 

The following is a further analogy between the rank $\frac 7 4$ constructions and the local field constructions for $X_r$. The buildings $X_r$ can be distinguished up to isomorphism, because the field $K_r$ can be recovered abstractly, by the results of Tits \cite{tits1974buildings}, from the building at infinity. However, it is not clear a priori how distant from each other these complexes are in the space of buildings (say in the $\tilde A_2$ case).  A similar phenomenon is also occurs in our rank \sq\ constructions. It is not clear a priori how distant from each other the complexes we build will be in the space of complexes of rank \sq, since there might a priori exist ``exotic'' isomorphisms between the spheres of large radius which we have not detected. Thus, we only prove an injectivity result, see the proof of Theorem \ref{T - pairwise non isomorphic cover}, which explains that the final statement is not more explicit than the one put forward in (a) above. 

\bigskip

\textbf{Acknowledgements.} The second author was partially supported by an NSERC discovery grant and a JSPS award.

\section{The parity invariant}\label{S - rank parity}

We first define a metric (and simplicial) invariant for complexes of rank \sq, called the parity (Def.\  \ref{D  - rank parity}).

Let $X$ be a complex of rank \sq. For convenience, we shall call 2-triangle a simplicial equilateral triangle in $X$ whose sides have length 2. Every 2-triangle contains 4 faces which are themselves (small) equilateral triangles.

Let $x\in X$ be a vertex and $L_x$ the link at $x$ (namely, the sphere of small radius around $x$ in $X$, endowed with the angular metric). Recall that we call \emph{root} at $x$ a metric embedding $\alpha\colon [0,\pi]\inj L_x$ in the link of $x$, such that $\alpha(0)$ is a link vertex. Every root has a rank $\rk(\alpha)$ which is a rational number in $[1,2]$. It is defined by
\[
\rk(\alpha)=1+{N(\alpha)\over q_\alpha}\tag{$\dagger$}
\] 
where 
\[
N(\alpha):=|\{\beta\in \Phi_x\mid \alpha\neq \beta, \alpha(0)=\beta(0), \alpha(\pi)=\beta(\pi)\}|,
\]
writing $\Phi_x$ for the set of roots at $x$ and, for a root $\alpha$, $q_\alpha$ for the valency of $\alpha(0)$ minus 1:
\[
q_\alpha:=\val(\alpha(0))-1.
\]  
We refer to \cite[\ts 4]{chambers} for more details on this definition. In a complex of rank $7/4$, the rank of a root can be 2 or \th.

Every side in a 2-triangle defines two roots at every vertex $x$ of its center (small) triangle, which are of the form $\alpha(t)$ and $\alpha(\pi-t)$ for $t\in [0,\pi]$ for some $\alpha\in \Phi_x$. These two roots have the same rank, which we call the rank of the corresponding side of the 2-triangle.

If $T_1$, $T_2$ are two 2-triangles, we say that $T_1$ and $T_2$ are in \emph{branching configuration} if the intersection $T_1\cap T_2$ contains 3 triangles. In this case, we call \emph{branching permutation}, and we write $T_1\to T_2$, the transformation  that fixes the intersection $T_1\cap T_2$ pointwise, and takes the free triangle in $T_1$ isometrically to the free triangle in  $T_2$. Such a transformation is involutive. The rank of the two sides of $T_1$, which are not included in the fixed set $T_1\cap T_2$, must change under such a transformation:

\begin{lemma}\label{L - rank parity lemma} Let $X$ be a complex of rank \sq. Let $T_1, T_2$ be two 2-triangles of $X$ in branching configuration. 
If $\alpha$ is the root of a side of $T_1$ and $s\colon T_1\to T_2$ is a branching permutation, then the corresponding root transformation $\alpha \mapsto s(\alpha)$ permutes the  values of the rank: 
\[
\begin{cases}\frac 3 2\mapsto 2\\ 2\mapsto \frac 3 2 \end{cases}
\]
unless $s(\alpha)=\alpha$. 
\end{lemma}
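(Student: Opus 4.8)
The plan is to pass everything to the links and reduce the statement to a short combinatorial computation in the Moebius--Kantor graph. Since the faces are equilateral, each edge of a link $L_x$ has length $\pi/3$, so a root at $x$ is exactly an isometric three-edge path (total length $\pi$) in $L_x$, and its two endpoints are then at graph distance $3$. First I would record how a branching permutation acts on such a path. Working at the midpoint $x$ of one of the two sides of $T_1$ that move, the three faces of $T_1$ at $x$ — two of them lying in the trapezoid $T_1\cap T_2$ and the third being the free (moved) corner — assemble the root as a path $u - y - z - w$ in $L_x$, the last edge $z-w$ being the direction into the free face. As the valency in $L_x$ is $3$, the faces containing the moved edge correspond to the three edges of $L_x$ at $z$, one of which ($z-y$) lies in the central face; since $s$ fixes $T_1\cap T_2$ pointwise and only replaces the free face, it fixes $u,y,z$ and sends $w$ to the third neighbour $w'$ of $z$. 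Thus $s$ acts on a moving root by $w\mapsto w'$, where $\{w,w'\}$ are precisely the two neighbours of $z$ other than $y$, i.e. the two vertices adjacent to $z$ at distance $3$ from $u$. The same description applies to either moving side; for the fixed side one has $s(\alpha)=\alpha$.

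Next I would translate the rank into this language. Here $q_\alpha=\val(\alpha(0))-1=2$, so $\rk(\alpha)=1+N(\alpha)/2$, and $N(\alpha)+1$ equals the number of three-edge geodesics from $u$ to $w$; because the graph has girth $6$ the ball of radius $2$ about $u$ is a tree, so this number equals the number of neighbours of $w$ lying at distance $2$ from $u$. At this point I would invoke two standard properties of the Moebius--Kantor graph: it is bipartite, and it is antipodal, i.e. each vertex $u$ has a \emph{unique} vertex $\bar u$ at distance $4$, with $\bar{\bar u}=u$. Bipartiteness forces each neighbour of the distance-$3$ vertex $w$ to be at even distance from $u$, hence (excluding $u$ itself) at distance $2$ or $4$; as $\bar u$ is the only vertex at distance $4$, the vertex $w$ has either two or three neighbours at distance $2$, according as $w$ is or is not adjacent to $\bar u$. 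Hence $\rk(\alpha)=\frac32$ iff $w\sim\bar u$, and $\rk(\alpha)=2$ iff $w\not\sim\bar u$.

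The crux is then to show that exactly one of $w,w'$ is adjacent to $\bar u$. The key observation is that the middle vertex $z$, which is at distance $2$ from $u$, is also at distance $2$ from $\bar u$: indeed $z\neq\bar u$, and by uniqueness of the antipode together with $\bar{\bar u}=u$ a vertex at distance $4$ from $\bar u$ must be $u$ itself, so $z$ is not at distance $4$ from $\bar u$; lying in the same part of the bipartition as $\bar u$ it is at even distance, hence at distance $2$. Consequently $z$ has exactly one neighbour at distance $1$ from $\bar u$ (two such would close a $4$-cycle, impossible since the girth is $6$). This neighbour is not $y$, since $y$ is at distance $1$ from $u$ and hence at distance $3$ from $\bar u$; therefore it is $w$ or $w'$, and exactly one of the two is adjacent to $\bar u$. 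Combined with the previous paragraph this gives $\{\rk(\alpha),\rk(s(\alpha))\}=\{\frac32,2\}$ whenever $w\neq w'$, that is whenever $s(\alpha)\neq\alpha$, which is the assertion.

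The step I expect to demand the most care is the geometric translation in the first paragraph: verifying that in a branching configuration the three shared faces necessarily form the trapezoid (so the free face is a corner face), and that the branching permutation acts on each moving root exactly by exchanging $w$ with the third neighbour $w'$ of $z$. Once this dictionary is established, the rank computation and the antipode argument are purely combinatorial facts about the Moebius--Kantor graph, the heart of the matter being the implication $d(u,z)=2\Rightarrow d(\bar u,z)=2$.
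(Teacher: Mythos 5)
Your argument is correct and follows essentially the same route as the paper: both reduce the lemma to the fact that a length-$\tfrac{2\pi}{3}$ arc (a non-backtracking two-edge path $u\hbox{--}y\hbox{--}z$) in the Moebius--Kantor link has exactly two extensions to a root, one of rank $\tfrac32$ and one of rank $2$, the branching permutation exchanging the two. The only difference is that the paper asserts this dichotomy without justification, whereas you actually prove it --- via bipartiteness, girth $6$, and the uniqueness of the antipode $\bar u$ at distance $4$ --- and that verification is correct and fills in the detail the paper leaves to a direct computation.
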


\begin{proof}
Consider such a root $\alpha$ at a vertex 
$x$. In the link $L_x$, every isometric embedding $\alpha\colon [0,\frac{2\pi}3]\inj L_x$, where $\alpha(0)$ is a vertex, admits exactly two isometric extensions $\alpha\colon [0,\pi]\inj L_x$ into a root. One of them has rank \th, the other  rank 2. Hence the two values of the rank are permuted by a branching permutation.
\end{proof}

Using Lemma \ref{L - rank parity lemma} one can define a metric invariant of the complex  taking values in $\ZI/2\ZI$, and attached to every face of $X$.

\begin{definition}\label{D  - rank parity}
The \emph{parity} of a triangle face $t$ in a complex of rank \sq\ is the parity of the number of roots of rank 2 in a 2-triangle $T$ in which $t$ embeds as the centerpiece. 
\end{definition} 

It follows from Lemma \ref{L - rank parity lemma} that this is a well defined map
\[
 X^{2}\to \ZI/2\ZI
\]
where $X^{2}$ denotes the set of 2-faces of the 2-complex $X$. We call this map the parity map.
It is clearly an invariant of isomorphism:    

\begin{lemma}
Simplicial isomorphisms between complexes of rank \sq\ preserve the parity of faces.  
\end{lemma}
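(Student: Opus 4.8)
The plan is to observe that every ingredient entering Definition~\ref{D  - rank parity} is a metric-simplicial datum, hence is transported by a simplicial isomorphism. Let $\phi\colon X\to Y$ be a simplicial isomorphism between complexes of rank \sq. Since all $2$-faces are equilateral triangles of a fixed size, $\phi$ is an isometry for the induced length metrics; consequently, for every vertex $x\in X$ it induces an isometry of links $\phi_x\colon L_x\to L_{\phi(x)}$ for the angular metric.

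First I would verify that $\phi_x$ carries roots to roots and preserves their rank. A root at $x$ is a metric embedding $\alpha\colon[0,\pi]\inj L_x$ with $\alpha(0)$ a link vertex; postcomposing with the isometry $\phi_x$ produces a metric embedding $\phi_x\circ\alpha\colon[0,\pi]\inj L_{\phi(x)}$, and $\phi_x(\alpha(0))$ is again a link vertex since $\phi$ is simplicial. Thus $\phi_x$ maps $\Phi_x$ bijectively onto $\Phi_{\phi(x)}$. It also preserves the two quantities appearing in the formula $(\dagger)$: being an isometry it preserves endpoints, so it carries the set counted by $N(\alpha)$ bijectively onto the one counted by $N(\phi_x\circ\alpha)$, and it preserves the valency of $\alpha(0)$, whence $q_\alpha=q_{\phi_x\circ\alpha}$. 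Therefore $\rk(\phi_x\circ\alpha)=\rk(\alpha)$.

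Next I would transport the $2$-triangle used to compute the parity. Fix a face $t$ of $X$ and a $2$-triangle $T$ of which $t$ is the centerpiece. Because $\phi$ is a simplicial isometry, $\phi(T)$ is again an equilateral triangle of side $2$, i.e.\ a $2$-triangle of $Y$, and $\phi$ sends the four sub-faces of $T$ to the four sub-faces of $\phi(T)$, carrying the central face $t$ to the central face $\phi(t)$; hence $\phi(t)$ is the centerpiece of $\phi(T)$. Each side of $T$ determines roots at the vertices of $t$, as recalled before Lemma~\ref{L - rank parity lemma}, and by the previous paragraph $\phi$ matches these with the corresponding roots attached to $\phi(T)$, preserving their ranks. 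Consequently the number of rank~$2$ roots in $T$ equals the number of rank~$2$ roots in $\phi(T)$, and in particular these two integers have the same parity.

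Finally, by Lemma~\ref{L - rank parity lemma} the parity computed from a $2$-triangle is independent of the chosen $2$-triangle, so the common value above is at once the parity of $t$ and the parity of $\phi(t)$; that is, $\phi$ intertwines the parity maps of $X$ and $Y$. The only step that deserves genuine care is the invariance of the rank of a root under $\phi_x$, and this is precisely where one uses that $(\dagger)$ is assembled solely from the combinatorics of endpoints and valencies in the link --- data manifestly preserved by a simplicial isometry.
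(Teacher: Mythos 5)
Your argument is correct and is exactly the routine verification the paper omits: the authors state the lemma without proof, remarking only that the parity ``is clearly an invariant of isomorphism,'' and your unwinding --- links are carried isometrically, roots to roots, $N(\alpha)$ and $q_\alpha$ preserved, hence ranks and the rank-$2$ count in a transported $2$-triangle preserved --- is the intended justification. No discrepancy to report.
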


In particular, the automorphism group $\Aut(X)$ of a complex $X$ of rank \sq\ acts in a parity preserving way.

\begin{definition}
We will say that a complex of rank \sq\ is even (resp.\ odd) if  its faces are even (resp.\ odd). 
\end{definition}

In general, one might expect (compare \ts\ref{S- Prescription}) that a complex of rank \sq\  has mixed parity.

\section{Explicit computations}\label{S- Explicit}

In \cite[\ts 4]{rd} we found 13 complexes of rank \sq. The parity map can be computed explicitly in these examples, which provides new information on these complexes. We explain how in this section. 

The simplest case to consider is that of $X_0:=\tilde V_0$. The complex can be described by its fundamental set of triangles as follows:
\[
V_0:=[[1,2,6],[2,3,7],[3,4,8],[4,5,1],[5,6,2],[6,7,3],[7,8,4],[8,1,5]].
\]
It is the simplest case because the automorphism group is face transitive, and we know in advance that the complex $X_0$ is either odd or even.  

\begin{proposition}\label{P - V0 odd}
$X_0$ is odd. 
\end{proposition}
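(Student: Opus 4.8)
The plan is to exploit the face-transitivity of $\Aut(X_0)$: since the parity map is an isomorphism (hence automorphism) invariant, all faces of $X_0$ carry the same parity, and it suffices to compute the parity of a single face $t$. By Definition \ref{D  - rank parity} and the well-definedness furnished by Lemma \ref{L - rank parity lemma}, I may choose any one $2$-triangle $T$ having $t$ as its centerpiece and count, modulo $2$, how many of its three sides have rank $2$.

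Concretely, I would first fix a face $t=[p,q,r]$ and realize it as the central triangle $[M_{AB},M_{BC},M_{CA}]$ of a $2$-triangle $T$ with corners $A,B,C$; the three corner faces $[A,p,r]$, $[B,p,q]$, $[C,q,r]$ are obtained by extending the edges of $t$ across the link vertices, and one must check that these extensions assemble into an honest equilateral $2$-triangle (i.e.\ that $A,p,B$, then $B,q,C$, then $C,r,A$ are collinear through the respective midpoints). This realizability is a purely local statement, to be verified inside $V_0$ from its list of eight triangles after lifting to $X_0=\tilde V_0$.

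The heart of the computation is the rank of each side. Each side, say $AB$ through the vertex $p=M_{AB}$, determines a root $\alpha$ at $p$ whose endpoints $\alpha(0),\alpha(\pi)$ are the two link vertices pointing toward $A$ and toward $B$; these lie at angular distance $\pi$, i.e.\ at graph-distance $3$ in the Moebius--Kantor link $L_p$. Since every vertex of the Moebius--Kantor graph has valency $3$, the formula $(\dagger)$ gives $q_\alpha=2$, so $\rk(\alpha)=\tfrac32$ exactly when there are two length-$3$ geodesics between the endpoints ($N(\alpha)=1$) and $\rk(\alpha)=2$ exactly when there are three ($N(\alpha)=2$). I would therefore write down $L_p$ explicitly as the Moebius--Kantor graph, identify its vertices with the edge-germs at $p$ coming from the gluing data of $V_0$, locate the antipodal pair selected by the side $AB$, and count the length-$3$ paths joining them; repeating at $q$ and $r$ for the sides $BC$ and $CA$ yields the three ranks, whose number of rank-$2$ values mod $2$ is the parity. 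The aim is to exhibit an odd count (one or three rank-$2$ sides).

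The main obstacle will be the bookkeeping of this local combinatorics: realizing the link at a vertex of $X_0$ as the Moebius--Kantor graph with the correct identification of link vertices with edge-germs arising from the eight triangles of $V_0$, tracking which antipodal pairs the three sides of $T$ select, and enumerating the geodesics without error. A useful sanity check is that in the Moebius--Kantor graph the number of length-$3$ geodesics between distance-$3$ vertices does take both values $2$ and $3$, consistent with Lemma \ref{L - rank parity lemma} (so that a branching permutation genuinely exchanges $\tfrac32$ and $2$); this both guards against miscounting and confirms that the parity is the content-bearing quantity. Once the three ranks are in hand, the conclusion follows at once.
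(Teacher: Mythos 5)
Your plan coincides with the paper's proof: it fixes the face $[1,2,6]$, invokes face-transitivity so that one face suffices, realizes a $2$-triangle with that face as centerpiece from the triangle list of $V_0$, and computes the ranks of the three associated roots in the explicitly labelled Moebius--Kantor link via $(\dagger)$ (finding ranks $\tfrac32,2,\tfrac32$, hence oddness), exactly as you propose, and your translation of $(\dagger)$ into counting length-$3$ paths between antipodal link vertices is correct. The only thing separating your proposal from the paper's proof is that you have not yet executed the link computation itself, which is the entire content of the argument.
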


\begin{proof}
Consider for instance the face $[1,2,6]$. It is contained in $8(=2^3)$ 2-triangles. If the face $[1,2,6]$ is oriented counterclockwise, one of these 2-triangles is given by the faces $[2,3,7]$,  $[6,7,3]$, and $[4,5,1]$ in this order. Labelling the link vertices by a signed integer $\pm k$, where $k\in \{1,\ldots, 8\}$, corresponding to an incoming ($+k$) or a outgoing ($-k$) edge with label $k$ in $X_0$, this gives us three roots at the vertices of $[1,2,6]$
\[
\alpha_1\colon 3\to -6\to 2\to-3
\]
\[
\alpha_2\colon 5\to-1\to6\to-7
\]
\[
\alpha_3\colon 7\to-2\to1\to-4
\]
corresponding to the sides of the given 2-triangle. A direct computation in the labelled link  using $(\dagger)$ shows that 
\[
\rk(\alpha_1)=\rk(\alpha_3)=\frac 3 2
\]
\[
\rk(\alpha_2)=2.
\]
where the representation of the link and its labelling is given by 

\begin{figure}[H]
\begin{tikzpicture}[shift ={(0.0,0.0)},scale = 2.0]
\tikzstyle{every node}=[font=\small]

\node (v1) at (1.0,0.0){$1$};
\node (v2) at (0.92,0.38){$-2$};
\node (v3) at (0.71,0.71){$6$};
\node (v4) at (0.38,0.92){$-7$};
\node (v5) at (-0.0,1.0){$3$};
\node (v6) at (-0.38,0.92){$-4$};
\node (v7) at (-0.71,0.71){$8$};
\node (v8) at (-0.92,0.38){$-1$};
\node (v9) at (-1.0,-0.0){$5$};
\node (v10) at (-0.92,-0.38){$-6$};
\node (v11) at (-0.71,-0.71){$2$};
\node (v12) at (-0.38,-0.92){$-3$};
\node (v13) at (0.0,-1.0){$7$};
\node (v14) at (0.38,-0.92){$-8$};
\node (v15) at (0.71,-0.71){$4$};
\node (v16) at (0.92,-0.38){$-5$};
\draw[solid,thin,color=black,-] (v1) -- (v2);
\draw[solid,thin,color=black,-] (v2) -- (v3);
\draw[solid,thin,color=black,-] (v3) -- (v4);
\draw[solid,thin,color=black,-] (v4) -- (v5);
\draw[solid,thin,color=black,-] (v5) -- (v6);
\draw[solid,thin,color=black,-] (v6) -- (v7);
\draw[solid,thin,color=black,-] (v7) -- (v8);
\draw[solid,thin,color=black,-] (v8) -- (v9);
\draw[solid,thin,color=black,-] (v9) -- (v10);
\draw[solid,thin,color=black,-] (v10) -- (v11);
\draw[solid,thin,color=black,-] (v11) -- (v12);
\draw[solid,thin,color=black,-] (v12) -- (v13);
\draw[solid,thin,color=black,-] (v13) -- (v14);
\draw[solid,thin,color=black,-] (v14) -- (v15);
\draw[solid,thin,color=black,-] (v15) -- (v16);
\draw[solid,thin,color=black,-] (v16) -- (v1);
\draw[solid,thin,color=black,-] (v1) -- (v6);
\draw[solid,thin,color=black,-] (v3) -- (v8);
\draw[solid,thin,color=black,-] (v5) -- (v10);
\draw[solid,thin,color=black,-] (v7) -- (v12);
\draw[solid,thin,color=black,-] (v9) -- (v14);
\draw[solid,thin,color=black,-] (v11) -- (v16);
\draw[solid,thin,color=black,-] (v13) -- (v2);
\draw[solid,thin,color=black,-] (v15) -- (v4);

\end{tikzpicture}
\end{figure}

\noindent This proves that  $X_0$ is odd. 
\end{proof}

This proof works for every face in any complex of rank \sq. Similar computations, for example, will lead to the following statement.

\begin{proposition}\label{P - 74 even}
The following complexes of rank \sq\   are even:
\begin{align*}
V^1_0=[[1, 2, 3], [1, 4, 5], [1, 6, 4], [2, 6, 8], [2, 8, 5], [3, 6, 7], [3, 7, 5], [4, 8, 7]]\\
V^2_0=[[1, 2, 3], [1, 4, 5], [1, 6, 7], [2, 4, 6], [2, 8, 5], [3, 6, 8], [3, 7, 5], [4, 8, 7]]\\
\check V_0^2=[[1, 2, 3], [1, 4, 5], [1, 6, 7], [2, 6, 4], [2, 8, 5], [3, 6, 8], [3, 7, 5], [4, 8, 7]]\\
V_4^1=[[1, 1, 5], [2, 2, 5], [3, 3, 6], [4, 4, 6], [1, 3, 8],  [2, 7, 4],  [5, 8, 7], [6, 7, 8]]
\end{align*}
\end{proposition}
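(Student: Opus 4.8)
The plan is to treat each of the four complexes exactly as the proof of Proposition \ref{P - V0 odd} treats $X_0$, namely by an explicit local computation of the parity at a single representative face. The essential point is that the parity invariant is computed entirely inside the links: for a chosen face $t$, one exhibits the $8$ possible $2$-triangles $T$ in which $t$ embeds as the centerpiece, reads off the three sides of each such $T$ as roots at the vertices of $t$, and computes the rank of each side from the labelled link via the formula $(\dagger)$. By Definition \ref{D - rank parity} and Lemma \ref{L - rank parity lemma}, the parity of $t$ is the parity of the number of rank-$2$ roots appearing, and this number is well-defined independently of which of the $8$ branching-related $2$-triangles one selects.

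First I would, for each complex $V^1_0$, $V^2_0$, $\check V_0^2$, and $V_4^1$ in turn, build the link $L_x$ at each vertex $x$ from the given fundamental list of triangles, using the same sign convention as in Proposition \ref{P - V0 odd}: a link vertex is a signed label $\pm k$ recording an incoming ($+k$) or outgoing ($-k$) oriented edge. Since every link is the Moebius--Kantor graph, this amounts to recording, for each ordered pair of edges meeting at $x$, whether they span a face; the octagon-with-chords picture drawn for $X_0$ is the template. Second, I would fix one face $t$ (say the first triangle in each list) and a single $2$-triangle $T$ containing $t$ as its centerpiece, obtaining the three boundary roots $\alpha_1,\alpha_2,\alpha_3$ as length-$\pi$ paths in the three relevant links, exactly as the three displayed roots were produced for $X_0$. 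Third, I would evaluate $\rk(\alpha_i)$ for each $i$ by counting, via $N(\alpha)$, the number of distinct roots sharing the same endpoints $\alpha(0),\alpha(\pi)$, so that $\rk(\alpha)=\tfrac32$ or $2$ according to $(\dagger)$; tallying the rank-$2$ sides then gives the parity.

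The claim of Proposition \ref{P - 74 even} is that in all four cases this count is even, in contrast with the odd count for $X_0$. Because the parity map is an isomorphism invariant (and because, for these particular complexes, the automorphism group is sufficiently transitive on faces that the parity is constant, as was the case for $X_0$), it suffices to verify evenness at one face per complex; the $2$-triangle can be chosen so that either zero or two of its three sides have rank $2$. I expect the bookkeeping of orientations to be the only real obstacle: the ranks depend sensitively on the signs $\pm k$ assigned to the edges, so the main care is in correctly orienting each face consistently when extracting the links and the three roots, precisely as the counterclockwise orientation of $[1,2,6]$ was fixed for $X_0$. Once the labelled links are drawn, the rank computations are mechanical applications of $(\dagger)$, and the evenness of the rank-$2$ count in each case follows by direct inspection.
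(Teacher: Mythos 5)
Your overall method---build the labelled links, pick a $2$-triangle with the given face as centerpiece, read off the three boundary roots, and compute their ranks via $(\dagger)$---is exactly the computation the paper has in mind; the paper itself only says that ``similar computations'' to the proof of Proposition \ref{P - V0 odd} yield the result, so the mechanical part of your plan is the intended one.

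However, there is a genuine gap in the reduction to ``one face per complex.'' You justify this by asserting that for these four complexes the automorphism group is sufficiently transitive on faces that the parity is constant. That is precisely the special feature the paper attributes to $X_0$ \emph{only} (``It is the simplest case because the automorphism group is face transitive, and we know in advance that the complex $X_0$ is either odd or even''), and it fails in general here. For instance, in $V_4^1$ the fundamental domain contains faces of different combinatorial types, e.g.\ $[1,1,5]$ (with a repeated edge label) and $[5,8,7]$ (with three distinct labels), so no automorphism of the compact complex can carry one to the other, and there is no a priori reason the parity should be constant across faces. Indeed, the appendix tabulates the parity face by face for the eight remaining complexes precisely because they turn out to have mixed parity. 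To prove evenness you must therefore run the link computation at each of the $8$ faces of each of the four complexes (or first determine the actual face orbits of the automorphism group and check one face per orbit); checking a single face only shows that \emph{that} face is even, which is strictly weaker than the statement.
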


The classification given in \ts4 of \cite{rd} contains eight other complexes of rank \sq. These complexes are of mixed parity.  The details of the parity map are given in an appendix.

\section{Constructing universal covers}\label{S - surgery construtions}

Our goal in this section is to use the parity to construct compact complexes of rank \sq\ with non isomorphic universal covers.

\begin{theorem}\label{T - pairwise non isomorphic cover}
There exists infinitely many compact complexes of rank \sq\ with pairwise non isomorphic universal covers.   
\end{theorem}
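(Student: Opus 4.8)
The plan is to realise the required complexes through the surgery construction of \cite{surgery}, and to separate their universal covers by computing the parity map of Definition \ref{D  - rank parity}. The decisive structural observation is that the parity is a \emph{local} invariant: the parity of a face is read off from the ranks of the roots occurring in the $2$-triangles surrounding it, and these ranks are determined inside a ball of bounded radius. Hence, if $Y$ is a compact complex of rank \sq\ and $\tilde Y\to Y$ its universal cover, the parity of a face of $\tilde Y$ equals that of its image in $Y$; in particular the parity map on $\tilde Y$ is invariant under the deck group, and any simplicial isomorphism $\tilde Y_1\to\tilde Y_2$ intertwines the two parity maps. Thus parity is an honest isomorphism invariant of the universal cover, and it is this invariant that I would use to distinguish the covers.

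To extract a number from the parity I would use the proportion of odd faces. Because $\Aut(\tilde Y)$ contains the (cocompact) deck group and acts with finite stabilizers by uniform discreteness, the quotient $\tilde Y/\Aut(\tilde Y)$ is a canonical finite object attached to $\tilde Y$, and the parity descends to it. Setting
\[
\rho(\tilde Y)\;:=\;\frac{\#\{\text{odd faces in }\tilde Y/\Aut(\tilde Y)\}}{\#\{\text{faces in }\tilde Y/\Aut(\tilde Y)\}}
\]
(with the usual weighting of each face by the reciprocal of its stabilizer) gives a rational number that is manifestly preserved by any parity-preserving isomorphism, hence an isomorphism invariant of $\tilde Y$. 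It therefore suffices to produce, by surgery, a sequence of compact complexes whose universal covers realise infinitely many distinct values of $\rho$.

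The computation of $\rho$ reduces to tracking the parity through a single surgery move. Away from the gluing locus the $2$-triangles are untouched, so by locality the parity is unchanged there; only the bounded set of faces meeting the locus can flip, and Lemma \ref{L - rank parity lemma} pins down exactly how the rank-$2$ roots are created or destroyed when the free triangles are reglued. Performing the construction of \cite{surgery} with an integer parameter $k\in\IN$ --- for instance the number of elementary moves carried out within a fundamental region --- then alters the count of odd faces per fundamental domain by a controlled amount, and arranging the $k$-th construction so that this count is strictly monotone in $k$ yields complexes $Y_k$ with the values $\rho(\tilde Y_k)$ pairwise distinct. This is the step I expect to be the main obstacle: one must verify that the surgery can be iterated while the links remain Moebius--Kantor graphs (so that $Y_k$ is genuinely of rank \sq), and that each move makes a net, non-cancelling contribution to the parity count rather than merely interchanging the two rank values as in the fixed case $s(\alpha)=\alpha$ excluded in Lemma \ref{L - rank parity lemma}; controlling $\Aut(\tilde Y_k)$ well enough to evaluate $\rho$ is a further technical point.

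Finally, I would stress that this yields only an \emph{injectivity} statement, as announced in the introduction: distinct values of $\rho$ force the $\tilde Y_k$ to be pairwise non-isomorphic, so there are infinitely many isomorphism types, but the argument gives no control over hypothetical ``exotic'' isomorphisms between covers sharing the same value of $\rho$, and hence no explicit enumeration of the isomorphism classes produced. The conclusion of Theorem \ref{T - pairwise non isomorphic cover} is in this sense exactly as explicit as statement (a) of the introduction, and no more.
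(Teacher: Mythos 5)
Your overall strategy --- distinguish the universal covers of surgery constructions by a numerical invariant extracted from the parity map --- is the right one, and your preliminary observations (the parity is local, hence descends to the compact quotient and is intertwined by any isomorphism of covers) are correct and agree with what the paper records in \S\ref{S - rank parity}. But the proposal has a genuine gap exactly where you flag ``the main obstacle'': essentially all of the mathematical content of the theorem lies in exhibiting specific cobordisms and verifying, by explicit computations in the Moebius--Kantor links, how the parity behaves under composition, and none of that is carried out. The paper does precisely this: it takes the two cobordisms $X_{00},Y_{00}\colon C\to C$ of \cite{randomsurgery}, shows that the central collar in compositions of $Y_{00}$ with itself is even (Lemma \ref{L - even central}, because the $n$-fold composition covers the even complex $V_0^1$ of Prop.\ \ref{P - 74 even}), shows by a direct link computation that $X_{00}Y_{00}$ contains an odd face (Lemma \ref{L - product odd}), and only then forms $V_n:=X_{00}\,Y_{00}\cdots Y_{00}/\!\sim$. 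Without such computations your argument does not get off the ground: nothing guarantees that a surgery move makes a ``net, non-cancelling contribution'' to the odd-face count, and the paper explicitly warns that the parity of the collar faces is \emph{not} determined by the cobordism alone but depends on the composition.

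There is also a defect in the separation step itself. Your $\rho$ is a legitimate isomorphism invariant (since $\Aut(\tilde Y)$ is uniformly discrete and contains the cocompact deck group with finite index, the stabilizer-weighted ratio can be computed on $Y$ itself), but making the \emph{count} of odd faces per fundamental domain strictly monotone in $k$ does not make the \emph{ratio} $\rho$ injective, because the total number of faces grows as well: $1/100=2/200$. In the natural constructions the odd faces are confined to a bounded neighbourhood of the gluing locus while the total face count grows linearly, so one would need either a pigeonhole argument (extract a subsequence with constant odd count, giving strictly decreasing ratios) or a computation of the full parity map of each $V_k$, which is considerably more work. The paper sidesteps this --- and explicitly declines the ``proportion of even faces'' route --- by using instead the radius of evenness $e(X)$, the largest $n$ for which some ball $B(x,n)$ is even: Lemma \ref{L - even central} gives $e(X_n)\geq n$, Lemmas \ref{L - e finite} and \ref{L - product odd} give $e(X_n)<\infty$, and injectivity of $e$ on a subsequence is immediate. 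This requires only locating one large even ball and one odd face rather than an exact count, and it simultaneously yields the refinement announced in the introduction that the non-isomorphic covers coincide on arbitrarily large balls.
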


An idea is to construct complexes with ``different proportions'' of even  faces, which will therefore have non isomorphic universal covers. A direct implementation of this requires to assign to a complex of rank \sq, in a computable way, an average number of the even faces it contains. In view of the discussion in the introduction, we shall rather use  the following invariant, which measures the `radius of evenness' of the complex $X$.

\begin{definition} Let $X$ be a simply connected complex of rank \sq. We let $e(X)$ be the largest integer $n$ such that there exists a vertex $x\in X$ for which the  ball $B(x,n)$ of radius $n$ is even. 
\end{definition}

Clearly, if $e(X)\neq e(Y)$,  the complexes $X$ and $Y$ and not isomorphic. To prove the theorem, we will construct a sequence $X_n$ of universal covers on which $e$ is injective.

\begin{lemma}\label{L - e finite}
Let $X$ be a simply connected complex of rank \sq\ with cocompact isometry group. Then $X$ is even if and only if $e(X)=\infty$.
\end{lemma}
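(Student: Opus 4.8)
The plan is to treat the two implications separately. The forward direction is immediate, and the reverse direction I would establish by contraposition, with cocompactness supplying the essential input.

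\emph{Forward direction.} If $X$ is even then every face of $X$ is even by definition, so in particular every face contained in a ball $B(x,n)$ is even. Thus $B(x,n)$ is even for every vertex $x$ and every $n$, the set of radii of even balls is unbounded, and $e(X)=\infty$.

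\emph{Reverse direction (contrapositive).} Assume $X$ is not even; I will exhibit a finite $R$ for which no ball of radius $\ge R$ is even, so that $e(X)<\infty$. Since $X$ is not even there is an odd face. Invoking cocompactness of $\Gamma:=\Isom(X)$, I fix a finite fundamental subcomplex $D$ with $\Gamma D=X$ and set $\Delta:=\diam(D)<\infty$. Isometries of $X$ are simplicial and hence preserve parity, so every $\Gamma$-translate of an odd face is odd; after translating, I may assume $D$ itself contains an odd face $s_0$. Given any vertex $x$, cocompactness yields $\delta\in\Gamma$ with $x\in\delta D$, and then $\delta s_0\subseteq\delta D$ is an odd face all of whose points lie within $\Delta$ of $x$. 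Choosing $R:=\lceil\Delta\rceil+1$ guarantees $\delta s_0\subseteq B(x,R)$, so $B(x,R)$, and every larger ball, contains an odd face and is therefore not even. Hence the set of radii carrying an even ball is contained in $\{0,\dots,R-1\}$ and $e(X)<\infty$.

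The only delicate point is the step from cocompactness to the coarse density of a \emph{single} orbit of odd faces; I route this through a fundamental domain of finite diameter, which is what forces the translated odd face to sit genuinely inside $B(x,R)$ rather than merely near its boundary (hence $R=\lceil\Delta\rceil+1$ rather than $\Delta$). The remaining bookkeeping — that faces have bounded diameter, that the parity is computed in the ambient complex, and the comparison between metric and simplicial balls — is routine, and I do not anticipate a real obstacle: the substance of the lemma is simply that a nonempty family of odd faces is syndetic under a cocompact isometric action.
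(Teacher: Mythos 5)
Your argument is correct, but it runs in the opposite logical direction from the paper's. The paper proves the nontrivial implication directly: assuming $e(X)=\infty$, it takes a sequence of even balls $B(x_n,r_n)$ with $r_n\to\infty$, uses the compact fundamental set to translate the centers $x_n$ to a common vertex $x$ (a pigeonhole/subsequence extraction over the finitely many vertices of the fundamental domain), and concludes that every ball around $x$ is even, hence $X$ is even. You instead argue the contrapositive: a single odd face, pushed around by the cocompact isometry group, becomes a syndetic family of odd faces, so every ball of radius exceeding the diameter of a fundamental subcomplex contains one. Both proofs rest on the same two ingredients --- cocompactness and the invariance of parity under (simplicial) isometries --- so the substance is shared; but your version avoids the subsequence extraction and yields the sharper, quantitative conclusion $e(X)\leq\lceil\diam(D)\rceil$, i.e.\ an explicit a priori bound on $e(X)$ for any non-even $X$ in terms of a fundamental domain, which the paper's proof does not make visible. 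The points you flag as routine (parity being computed in the ambient complex, isometries being simplicial on these piecewise-Euclidean complexes) are indeed unproblematic here and are glossed over by the paper as well.
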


\begin{proof}
It is clear that $e(X)=\infty$ if $X$ is even. Conversely, assume that $e(X)=\infty$. Then there exists a sequence $(x_n)$ of vertices of $X$ and a sequence $r_n\to \infty$ of radii, such that the ball $B(x_n,r_n)$ in $X$ are even. Since $\Isom(X)$ has a compact fundamental set, we may find a vertex $x$ in $X$ such that for every $n$, the balls  $B(x,r_n)$ in $X$ are even. Thus, $X$ is even.
\end{proof}

In order to find an injectivity set for $e$ we will use the surgery construction of \cite{surgery}. In fact, this is the only construction of (infinitely many) compact complexes of rank \sq\ that we are aware of at this moment.

Let us start with a brief review of these constructions for groups of rank \sq. The surgery is described by a category $\Bord_{\frac 7 4}$ whose arrows are called the group cobordisms.  These objects in this category are called the collars.  Both the objects and the morphisms in $\Bord_{\frac 7 4}$ correspond to 2-dimensional complexes. 

The following 2-complex defines a collar in $\Bord_{\frac 7 4}$. It was used in \cite{randomsurgery} to define a model of random groups of rank \sq.
\[
(x,a,d),\ (y,c,d),\ (z,c,b),\ (x',d,a),\ (y',b,a),\ (z',b,c)
\]
 This 2-complex, which we will denote $C$, is obtained from a set of
six oriented equilateral triangles, with labeled edges, by identifying the edges respecting the labels and the orientations. It is not hard to check that this is an object in $\Bord_{\frac 7 4}$; in fact this collar is included in the ST lemma of \cite[\ts 8]{surgery}.

We will use two arrows 
\[
X_{00},Y_{00}\colon C\to C
\] 
in $\Bord_{\frac 7 4}$, keeping the notation of \cite[\ts 2]{randomsurgery} for consistency. As 2-complexes, these arrows have the following respective presentations 
\[
(x,a,d),(y,c,d), (z,c,b),\ (1,1,2),\ (2,a',d'),(4,c',d'),(3,c',b')
\]
\[
(4,d,a),(3,b,a), (2,b,c),\ (1,3,4),\ (x',d',a'),(y',b',a'),(z',b',c')
\]
and
\[
(x,a,d),(y,c,d), (z,c,b),\ (1,2,3),\ (4,a',d'),(2,c',d'),(1,c',b')
\]
\[
(1,d,a),(3,b,a), (4,b,c),\ (2,4,3),\ (x',d',a'),(y',b',a'),(z',b',c')
\]
described in \cite[Remark 2.8]{randomsurgery}. 
 As group cobordisms, when viewed as arrows $C\to C$ in the category $\Bord_{\frac 7 4}$, these complexes $Z$ are endowed with maps $L_Z\colon C\to X$ and $R_Z\colon C\to C$, which  in the case of $Z=X_{00}$ and $Z=Y_{00}$ are the obvious ones.

As in \ts\ref{S- Explicit}, Prop.\ \ref{P - V0 odd}, a direct computation shows that:

\begin{lemma} The triangles $(1,1,2)$ and $(1,3,4)$ (resp.\ $(1,2,3)$ and $(2,3,4)$) are even in  $X_{00}$ (resp.\ in  $Y_{00}$).
\end{lemma}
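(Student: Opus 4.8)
The plan is to follow the proof of Proposition \ref{P - V0 odd} essentially verbatim, since computing the parity of a face is a purely local calculation once the links at its three vertices are known. For each of the four faces named in the statement --- $(1,1,2)$ and $(1,3,4)$ in $X_{00}$, and $(1,2,3)$ and $(2,3,4)$ in $Y_{00}$ --- I would fix a single 2-triangle $T$ having that face as centerpiece, read off the three roots $\alpha_1,\alpha_2,\alpha_3$ bounding $T$ as sequences of signed edge labels in the Moebius--Kantor links of the three central vertices (incoming edges labelled $+k$, outgoing edges labelled $-k$, exactly as in \ts\ref{S- Explicit}), compute each $\rk(\alpha_i)\in\{\frac 3 2,2\}$ from formula $(\dagger)$, and check that the number of indices $i$ with $\rk(\alpha_i)=2$ is even. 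By Definition \ref{D  - rank parity} this number mod $2$ is the parity of the face, so an even count establishes that the face is even.

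The concrete steps are as follows. First, for each relevant vertex I would assemble its complete link inside $X_{00}$ (resp.\ $Y_{00}$) by listing all incident triangles from the given presentation and recording the signed labels around the Moebius--Kantor graph; this is the analogue of the labelled-link figure drawn in the proof of Proposition \ref{P - V0 odd}. Second, I would select one 2-triangle around the face and extract the three bounding roots. Here Lemma \ref{L - rank parity lemma} does real work: because a branching permutation flips the ranks of exactly the two free sides while fixing the third, the parity is independent of the chosen 2-triangle (two rank-flips change the count of rank-$2$ sides by an even number), so a single 2-triangle per face suffices. Third, the ranks follow from $(\dagger)$ and the counts are read off. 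I would also look for a symmetry of the presentations --- between $X_{00}$ and $Y_{00}$, or interchanging the two listed faces within each --- to halve the bookkeeping, but this is an optimization rather than a necessity.

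The main obstacle is not the rank computation itself, which is mechanical, but two preliminary points. The first is well-definedness: $X_{00}$ and $Y_{00}$ are cobordisms in $\Bord_{\frac 7 4}$ and hence carry boundary, so the parity is only defined at faces all of whose vertices have a complete Moebius--Kantor link inside the piece. I would therefore first verify that each of the four named faces is interior in this sense; since the numeric labels $1,2,3,4$ are precisely the surgery-internal edges introduced by $X_{00}$ and $Y_{00}$, the three vertices of each face should indeed carry full links, but this must be confirmed before the parity is meaningful. The second, and the only genuinely laborious, point is assembling the link labelings correctly: as in Proposition \ref{P - V0 odd} the computation is error-prone but entirely routine, and once the links are pinned down the ranks and their even counts follow immediately.
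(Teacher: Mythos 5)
Your proposal is correct and matches the paper's approach exactly: the paper proves this lemma by the same direct link-labelling and rank computation via $(\dagger)$ as in Proposition \ref{P - V0 odd}, and the well-definedness concern you raise (that the faces lie in the core of the cobordism, so their vertices carry full Moebius--Kantor links despite the boundary) is precisely the point the paper addresses in the remark immediately following the lemma.
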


Note that the cobordisms themselves are not  complexes of rank \sq, since they have a non trivial boundary. However, since the triangles referred to in the above lemma belong to the core, the link involved is indeed the Moebius--Kantor graph, and the parity makes sense in this case.

On the other hand, the parity of the collar faces is  not determined a priori by the cobordism alone, since it may depend on composition. We shall use this fact to construct complexes with different parity ratios.

\begin{lemma}\label{L - even central}
In the composition $Y_{00}Y_{00}$, the central collar is even.     
\end{lemma}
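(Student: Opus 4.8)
The plan is to compute the parity of the central collar $C$ inside the composite cobordism $Y_{00}Y_{00}$ by the same explicit method as in Proposition~\ref{P - V0 odd}, namely by exhibiting, for each face of $C$ sitting in the core of the composition, a $2$-triangle in which that face is the centerpiece, and counting the roots of rank $2$ among its three sides. The first step is to write down the $2$-complex underlying $Y_{00}Y_{00}$ explicitly: since $Y_{00}\colon C\to C$ is an arrow, I would glue two copies of the presentation of $Y_{00}$ along the middle collar $C$, matching the right boundary $R_{Y_{00}}$ of the first copy with the left boundary $L_{Y_{00}}$ of the second. This produces a single $2$-complex in which the central copy of $C$ now has all of its incident triangles present on both sides, so that every vertex of every central-collar face has its full link, which (as noted after Lemma~\ref{L - even central}'s predecessor) must be the Moebius--Kantor graph. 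At that point the parity of each central face is genuinely defined.

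Next, for each of the six faces of the central $C$, I would identify the edges labelled $a,b,c,d$ (and their primed partners coming from the two $Y_{00}$ blocks) and use them to locate the three other faces completing a $2$-triangle with the given central face as centerpiece. As in the proof of Proposition~\ref{P - V0 odd}, I would encode the three sides as roots $\alpha_1,\alpha_2,\alpha_3$ by listing the signed-label sequences $k_0\to k_1\to k_2\to k_3$ read off from the $2$-triangle, and then apply the rank formula $(\dagger)$ in the labelled Moebius--Kantor link to determine $\rk(\alpha_i)\in\{\tfrac32,2\}$ for each side. Summing the number of rank-$2$ roots over the $2$-triangle and reducing mod $2$ gives the parity of that central face; the claim is that every central face comes out even. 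Because the six central faces of $C$ fall into orbits under the symmetries of the collar, I expect to only need to carry this computation out for one representative face in each orbit, the rest following by symmetry.

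The main obstacle will be bookkeeping rather than conceptual: I must make sure the edge identifications across the composition $Y_{00}Y_{00}$ are correct, in particular that the primed labels $a',b',c',d'$ and the auxiliary labels $1,2,3,4$ are glued consistently when the right boundary of the first $Y_{00}$ is identified with the left boundary of the second. An error in the gluing would corrupt the links and hence every rank computation, so I would first verify independently that each central-collar vertex does receive the Moebius--Kantor link (a sanity check that also confirms the composite is a legitimate object on its core). Once the gluing is pinned down, the rank computations via $(\dagger)$ are mechanical, exactly parallel to the labelled-link computation displayed in Proposition~\ref{P - V0 odd}, and the evenness of the central collar should read off directly. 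The contrast with the individual cobordism --- where, by the remark preceding this lemma, the collar parity is \emph{not} determined by $Y_{00}$ alone --- is precisely the point: it is the composition that forces the central $C$ to be even, and this is what makes $e$ sensitive to how many times $Y_{00}$ is composed.
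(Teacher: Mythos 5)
Your plan is methodologically sound --- it is exactly the method the paper itself uses for the companion statement Lemma~\ref{L - product odd} about $X_{00}Y_{00}$ --- but it is not the route the paper takes here, and the difference is worth noting. The paper's proof is a one-line covering argument: the $n$-fold composition $Y_{00}\cdots Y_{00}$, closed up, yields finite covers of the complex $V_0^1$, which was already shown to be even in Proposition~\ref{P - 74 even}. Since the parity of a face is computed entirely inside the links of its vertices and links are preserved by covering maps, evenness of $V_0^1$ is inherited by the central collar of the composition, with no new link computation required. Your approach instead rebuilds $Y_{00}Y_{00}$ explicitly, verifies the gluing, and recomputes ranks via $(\dagger)$ for a $2$-triangle around each central face. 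What the paper's route buys is economy and a conceptual explanation of \emph{why} the central collar is even (it sits inside a cover of an even closed complex); what your route buys is independence from Proposition~\ref{P - 74 even} and a direct check that would catch any error there. Two caveats on your version: first, as written it is a plan, not a proof --- the decisive mechanical computation that certifies evenness is deferred, so the statement is not yet established by what you wrote; second, be careful with the symmetry reduction you invoke for the six central faces, since after composing two copies of $Y_{00}$ the two sides of the central collar are glued by different data and the collar's internal symmetries need not extend to symmetries of the ambient composition, so you may have to treat more orbit representatives than the symmetries of $C$ alone would suggest.
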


\begin{proof}
Indeed, the $n$-fold composition $\underbrace{Y_{00}\cdots Y_{00}}_n$ of the cobordism $Y_{00}$ leads to finite covers of the $V_0^1$, which is an even complex, as we pointed out in Prop.\ \ref{P - 74 even}.
\end{proof}

On the other hand we have:

\begin{lemma}\label{L - product odd}
The central collar in the composition $X_{00}Y_{00}$ contains an odd face.
\end{lemma}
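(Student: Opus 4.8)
The plan is to mimic the explicit-computation strategy used in Proposition \ref{P - V0 odd} and in the preceding lemma: identify a specific central collar face in the composite complex $X_{00}Y_{00}$, embed it as the centerpiece of a concrete $2$-triangle, read off the three roots at the vertices of that face from the labelled link, and apply the rank formula $(\dagger)$ to count the roots of rank $2$. The key point is that the \emph{central collar} of a composition $X_{00}Y_{00}$ is exactly the interface where the right boundary $R_{X_{00}}\colon C\to C$ is glued to the left boundary $L_{Y_{00}}\colon C\to C$; the faces living there have genuine Moebius--Kantor links (by the remark that the central collar belongs to the core of the composite), so the parity is well defined on them.

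\medskip

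First I would write down explicitly the $2$-complex $X_{00}Y_{00}$, using the two presentations given above for $X_{00}$ and $Y_{00}$ and performing the identification dictated by the composition in $\Bord_{\frac 7 4}$ (matching the copy of $C$ between the two cobordisms). Since both $X_{00}$ and $Y_{00}$ are arrows $C\to C$ with the ``obvious'' boundary maps $L_Z,R_Z$, the composite is obtained by identifying $R_{X_{00}}(C)$ with $L_{Y_{00}}(C)$, and the central collar is the image of this common $C$. Next I would select one face of that central collar and locate a $2$-triangle $T$ having it as centerpiece, exactly as in Prop.\ \ref{P - V0 odd} where the face $[1,2,6]$ was placed at the center of a $2$-triangle built from three neighbouring faces. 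Then I would label the link of each central vertex by signed integers (incoming $+k$, outgoing $-k$), extract the three roots $\alpha_1,\alpha_2,\alpha_3$ corresponding to the sides of $T$, and compute $\rk(\alpha_i)=1+N(\alpha_i)/q_{\alpha_i}$ for each.

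\medskip

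The parity of the chosen face is then the parity of the number of indices $i$ with $\rk(\alpha_i)=2$, by Definition \ref{D  - rank parity}. To prove the face is odd, I must exhibit an odd count (one or three) of rank-$2$ roots. The crucial contrast with Lemma \ref{L - even central} is that there the $n$-fold composite $Y_{00}\cdots Y_{00}$ yields a finite cover of the even complex $V_0^1$, forcing evenness; here the mixed composition $X_{00}Y_{00}$ breaks that regularity, and the gluing of an $X_{00}$-boundary to a $Y_{00}$-boundary produces a local configuration at the central collar whose link differs from the even case. I expect the computation to show that one of the three sides of $T$ acquires rank $2$ while the other two have rank $\frac 3 2$ (or vice versa), giving an odd face.

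\medskip

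\textbf{The main obstacle} will be the bookkeeping of the edge identifications across the composition: one must track carefully which labels $a,b,c,d$ and their primed/numbered counterparts get glued, so that the signed labelling of the central link is correct. An error in the orientation conventions (incoming versus outgoing edges) would flip a rank value and spuriously change the parity. Concretely, the delicate step is verifying, via $(\dagger)$, the value $N(\alpha)$ for each root, i.e.\ counting exactly how many distinct roots $\beta$ at the same vertex share the endpoints $\alpha(0),\alpha(\pi)$; this depends on the precise geometry of the glued link and is where the difference between the $X_{00}Y_{00}$ collar and the even $Y_{00}Y_{00}$ collar of Lemma \ref{L - even central} must manifest. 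Once one face is shown odd, the statement follows immediately, since the central collar then contains an odd face as claimed.
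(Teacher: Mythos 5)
Your strategy is exactly the one the paper uses: pick a face of the central collar, realize it as the centerpiece of an explicit $2$-triangle, read the three roots off the labelled links, and count the rank-$2$ roots via $(\dagger)$. But as written your proposal stops at the plan and never executes it, and for this lemma the execution \emph{is} the proof. The statement is a concrete computational claim, and the sentence ``I expect the computation to show that one of the three sides of $T$ acquires rank $2$ while the other two have rank $\frac 3 2$'' is not an argument: a priori the central collar of $X_{00}Y_{00}$ could perfectly well turn out to be even (as the central collar of $Y_{00}Y_{00}$ does, by Lemma \ref{L - even central}), and nothing in your heuristic about the mixed composition ``breaking regularity'' rules that out. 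The difference between the two cases only appears once the glued links are actually written down.

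Concretely, what is missing is the content of the paper's proof: it takes the face $(4,c',d')$ of the central collar, relabels the second copy of $Y_{00}$ as embedded in $X_{00}Y_{00}$, forms the $2$-triangle whose other three faces are $(1,3,4)$, $(3,c',b')$ and $(2,a',d')$, extracts the roots $\alpha_1\colon a'\to -d'\to c'\to -b'$, $\alpha_2\colon 3\to -c'\to 4\to -1$, $\alpha_3\colon 3\to -4\to d'\to -2$, and verifies in the two relevant labelled Moebius--Kantor links that $\rk(\alpha_1)=2$ while $\rk(\alpha_2)=\rk(\alpha_3)=\frac 32$, giving exactly one rank-$2$ side and hence an odd face. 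You correctly identified that the edge-identification bookkeeping across the glued copy of $C$ is the delicate point, but identifying the obstacle is not the same as overcoming it; until the links are drawn and $N(\alpha_i)$ is counted, the lemma is not proved.
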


\begin{proof}
Let us show that the face $(4,c',d')$ is odd. We can relabel the second cobordism $Y_{00}$ as follows
\[
(2,a',d'),(4,c',d'), (3,c',b'),\ (11,12,13),\ (14,a'',d''),(12,c'',d''),(11,c'',b'')
\]
\[
(11,d',a'),(13,b',a'), (14,b',c'),\ (12,14,13),\ (x'',d'',a''),(y'',b'',a''),(z'',b'',c'')
\]
viewing it as embedded in the composition $X_{00}Y_{00}$. A 2-triangle containing this face is described, in counterclockwise order, by the 3 faces $(1,3,4)$, $(3,c',b')$, and $(2,a',d')$, which gives us three roots 
\[
\alpha_1\colon a'\to -d'\to c'\to -b'
\]
\[
\alpha_2\colon 3\to -c'\to 4\to -1
\]
\[
\alpha_3\colon 3\to -4\to d'\to -2
\]
of which we have to compute the rank. 

The root $\alpha_1$ belongs to the following link in $X_{00}Y_{00}$:

\begin{center}
\begin{tikzpicture}[shift ={(0.0,0.0)},scale = 2.0]
\tikzstyle{every node}=[font=\small]

\node (v1) at (1.0,0.0){$6$};
\node (v2) at (0.92,0.38){$-14$};
\node (v3) at (0.71,0.71){$12$};
\node (v4) at (0.38,0.92){$-7$};
\node (v5) at (-0.0,1.0){$8$};
\node (v6) at (-0.38,0.92){$-5$};
\node (v7) at (-0.71,0.71){$14$};
\node (v8) at (-0.92,0.38){$-13$};
\node (v9) at (-1.0,-0.0){$a'$};
\node (v10) at (-0.92,-0.38){$-11$};
\node (v11) at (-0.71,-0.71){$13$};
\node (v12) at (-0.38,-0.92){$-b'$};
\node (v13) at (0.0,-1.0){$c'$};
\node (v14) at (0.38,-0.92){$-d'$};
\node (v15) at (0.71,-0.71){$11$};
\node (v16) at (0.92,-0.38){$-12$};
\draw[solid,thin,color=black,-] (v1) -- (v2);
\draw[solid,thin,color=black,-] (v2) -- (v3);
\draw[solid,thin,color=black,-] (v3) -- (v4);
\draw[solid,thin,color=black,-] (v4) -- (v5);
\draw[solid,thin,color=black,-] (v5) -- (v6);
\draw[solid,thin,color=black,-] (v6) -- (v7);
\draw[solid,thin,color=black,-] (v7) -- (v8);
\draw[solid,thin,color=black,-] (v8) -- (v9);
\draw[solid,thin,color=black,-] (v9) -- (v10);
\draw[solid,thin,color=black,-] (v10) -- (v11);
\draw[solid,thin,color=black,-] (v11) -- (v12);
\draw[solid,thin,color=black,-] (v12) -- (v13);
\draw[solid,thin,color=black,-] (v13) -- (v14);
\draw[solid,thin,color=black,-] (v14) -- (v15);
\draw[solid,thin,color=black,-] (v15) -- (v16);
\draw[solid,thin,color=black,-] (v16) -- (v1);
\draw[solid,thin,color=black,-] (v1) -- (v6);
\draw[solid,thin,color=black,-] (v3) -- (v8);
\draw[solid,thin,color=black,-] (v5) -- (v10);
\draw[solid,thin,color=black,-] (v7) -- (v12);
\draw[solid,thin,color=black,-] (v9) -- (v14);
\draw[solid,thin,color=black,-] (v11) -- (v16);
\draw[solid,thin,color=black,-] (v13) -- (v2);
\draw[solid,thin,color=black,-] (v15) -- (v4);

\end{tikzpicture}
\end{center}
Therefore it is of rank 2.

The roots $\alpha_2$ and $\alpha_3$ belong to the following link:

\begin{center}
\begin{tikzpicture}[shift ={(0.0,0.0)},scale = 2.0]
\tikzstyle{every node}=[font=\small]

\node (v1) at (1.0,0.0){$1$};
\node (v2) at (0.92,0.38){$-1$};
\node (v3) at (0.71,0.71){$4$};
\node (v4) at (0.38,0.92){$-6$};
\node (v5) at (-0.0,1.0){$5$};
\node (v6) at (-0.38,0.92){$-3$};
\node (v7) at (-0.71,0.71){$b'$};
\node (v8) at (-0.92,0.38){$-c'$};
\node (v9) at (-1.0,-0.0){$3$};
\node (v10) at (-0.92,-0.38){$-4$};
\node (v11) at (-0.71,-0.71){$d'$};
\node (v12) at (-0.38,-0.92){$-a'$};
\node (v13) at (0.0,-1.0){$2$};
\node (v14) at (0.38,-0.92){$-8$};
\node (v15) at (0.71,-0.71){$7$};
\node (v16) at (0.92,-0.38){$-2$};
\draw[solid,thin,color=black,-] (v1) -- (v2);
\draw[solid,thin,color=black,-] (v2) -- (v3);
\draw[solid,thin,color=black,-] (v3) -- (v4);
\draw[solid,thin,color=black,-] (v4) -- (v5);
\draw[solid,thin,color=black,-] (v5) -- (v6);
\draw[solid,thin,color=black,-] (v6) -- (v7);
\draw[solid,thin,color=black,-] (v7) -- (v8);
\draw[solid,thin,color=black,-] (v8) -- (v9);
\draw[solid,thin,color=black,-] (v9) -- (v10);
\draw[solid,thin,color=black,-] (v10) -- (v11);
\draw[solid,thin,color=black,-] (v11) -- (v12);
\draw[solid,thin,color=black,-] (v12) -- (v13);
\draw[solid,thin,color=black,-] (v13) -- (v14);
\draw[solid,thin,color=black,-] (v14) -- (v15);
\draw[solid,thin,color=black,-] (v15) -- (v16);
\draw[solid,thin,color=black,-] (v16) -- (v1);
\draw[solid,thin,color=black,-] (v1) -- (v6);
\draw[solid,thin,color=black,-] (v3) -- (v8);
\draw[solid,thin,color=black,-] (v5) -- (v10);
\draw[solid,thin,color=black,-] (v7) -- (v12);
\draw[solid,thin,color=black,-] (v9) -- (v14);
\draw[solid,thin,color=black,-] (v11) -- (v16);
\draw[solid,thin,color=black,-] (v13) -- (v2);
\draw[solid,thin,color=black,-] (v15) -- (v4);

\end{tikzpicture}
\end{center}
Both of them are of rank $\frac 32$. 

This proves that the face is odd.  
\end{proof}

\begin{proof}[Proof of Theorem \ref{T - pairwise non isomorphic cover}]
Consider the compact complexe of rank \sq\
\[
V_n:=X_{00}\underbrace{Y_{00}\cdots Y_{00}}_{2n}/\sim
\]
where $\sim$ identifies the two extremal copies of $C$ in the composition $X_{00}Y_{00}\cdots Y_{00}$. Let $X_n$ denote the universal cover of $V_n$. Since the projection $X_n\surj V_n$ reduces the distances, it is clear that if $x_n$ denotes a lift the core vertex in the $n^\text{th}$ cobordism $Y_{00}$ in $V_n$, then  by Lemma \ref{L - even central} the ball of radius $n$ in $X_n$ is even. It follows that $e(X_n)\geq n$. By Lemma \ref{L - e finite} and Lemma \ref{L - product odd}, we have $e(X_n)<\infty$. Therefore, we can find a subsequence of $(V_n)_n$ on which $e$ is injective.
\end{proof}

\begin{remark} 1) By their definition, the groups $\pi_1(V_n)$ in  Theorem \ref{T - pairwise non isomorphic cover} is accessible by surgery.

2) The proof shows that the universal cover of $V_0^1$ is not determined locally among the complexes of rank \sq, namely  one can find compact complexes of rank  \sq\  with universal cover distinct from $X_0^1$, but which coincide with it on arbitrary large balls. Another way to phrase this is to say that $V_0^1$ is an accumulation point in the space of complexes of rank \sq. 

3) 
Lemma \ref{L - even central} shows that the universal covers constructed in the present section always contains a definite amount of even faces, which can never be reduced to zero.
On the other hand, we have an example $X_0=\tilde V_0$ of a complex with no even face (Prop.\ \ref{P - V0 odd}). In the forthcoming section we show how to construct (simply connected) complexes of rank \sq\ with an arbitrary parity map.
\end{remark}

\section{Free constructions}\label{S- Prescription}

J.\ Tits observed that certain free constructions of (e.g.) Euclidean buildings can be given in the rank 2 case, in analogy with the construction of free projective planes. In fact, it was shown by M.\ Ronan in \cite{ronan1986construction} that these  Euclidean buildings (in the $\tilde A_2$ case)  can all be obtained by such free constructions, and furthermore that a prescription theorem  holds, to the effect that the residues at the vertices can be taken to run over any set of projective plane of the given order. Thus, there is  complete freedom in the constructions of buildings of type $\tilde A_2$.  We \cite{ronan1986construction,ronan1987building} for details on these results, and to \cite[\ts 2]{barre2000immeubles} for a different free prescription theorem along these lines, for buildings of type $\tilde A_2$ and order 2.

Our goal in this section is to show that complexes of rank \sq\ behave similarly in this respect. We  prove a similar prescription theorem, replacing the projective planes by a prescription theorem for the parity map.

\begin{theorem}\label{T - Free constructions}
There are free constructions of simply connected complexes of rank \sq\ with a given parity map. In particular, there exist uncountably many pairwise non isomorphic simply connected complexes of rank \sq. 
\end{theorem}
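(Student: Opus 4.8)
The plan is to prove the prescription statement by an inductive free construction, building the complex as an increasing union $X=\bigcup_n X_n$ of geodesically convex balls $X_n=B(x_0,n)$ around a fixed base vertex $x_0$, and realizing the prescribed parity one layer at a time. At stage $n$ I assume that every vertex of $X_n$ at distance $<n$ from $x_0$ has a complete link isomorphic to the Moebius--Kantor graph, and that every face contained in the interior already carries its prescribed parity; the boundary vertices carry only partial links, recording the faces forced so far. The inductive step is to complete each boundary link to a full Moebius--Kantor graph and to attach the faces of the next layer, the only freedom being in how these completions are performed. It is exactly this freedom that is used to set the parity of the newly created faces.

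The mechanism for setting parities is Lemma \ref{L - rank parity lemma}. Whenever three of the four small faces of a potential $2$-triangle are already present, the two ways of adding the free triangle are exchanged by a branching permutation, and by the lemma this flips the rank of the affected sides between $2$ and $\frac32$, hence flips the parity (Definition \ref{D  - rank parity}) of the corresponding centre face. Thus each independent branching choice available along the boundary contributes one bit of freedom. First I would isolate a purely local statement in the Moebius--Kantor graph: a partial link arising on the boundary admits at least one completion to the full graph, and the completions differing by branching moves realize, independently, every assignment of parities to the faces created at that vertex. Granting this, the inductive step proceeds by choosing, for each new face in the order fixed by an enumeration of $X^2$, the completion that produces the prescribed parity; locality (each $2$-triangle centred at an already-placed face lies in the interior and is never touched again) guarantees that previously fixed parities are preserved.

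Simple connectivity comes essentially for free: the Moebius--Kantor graph has girth $6$, so in the angular metric each link has systole exactly $2\pi$, the complex satisfies the link condition and is therefore CAT(0); every ball $X_n$ is convex, hence contractible, and the union $X$ is simply connected (indeed contractible). The main obstacle is the local statement of the previous paragraph: one must verify, by a finite combinatorial analysis of the Moebius--Kantor graph in the spirit of the explicit computations of \ts\ref{S- Explicit}, that the branching moves are simultaneously available and mutually independent, so that an arbitrary target parity can be attained rather than a single value merely being computed. Controlling the interaction between the completions at different boundary vertices sharing an edge is the delicate point, and is where the rigidity of the graph must be exploited carefully.

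For the final clause, the prescription theorem produces, for every function $p\colon X^2\to\ZI/2\ZI$, a simply connected complex of rank \sq\ whose parity map is $p$, and there are $2^{\aleph_0}$ such functions. Since the parity map is an isomorphism invariant (the Lemma following Definition \ref{D  - rank parity}), two of these complexes can be isomorphic only through an isomorphism identifying their parity maps. By the rigidity underlying the uniform discreteness of the automorphism groups noted in the Introduction, such an isomorphism is determined by its restriction to a ball of a fixed radius, a finite piece of data with only countably many possibilities; hence each isomorphism type is realized by at most countably many of the functions $p$. Therefore the continuum of prescribed parity maps yields uncountably many pairwise non isomorphic simply connected complexes of rank \sq.
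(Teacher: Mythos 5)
Your overall architecture coincides with the paper's: an inductive construction of an increasing union of balls $B_1\subset B_2\subset\cdots$, with the parity of each new layer of faces set by exploiting the two-fold freedom in extending an arc of length $\frac{2\pi}{3}$ in a link to a full root (Lemma \ref{L - rank parity lemma}), and simple connectivity coming from the CAT(0) link condition. The problem is that the decisive step --- what you yourself call ``the main obstacle'' and ``the delicate point'' --- is precisely the mathematical content of the paper's proof, and you leave it entirely unproved. The paper isolates it as a lemma: if $S_n=B_n\setminus (B_{n-1})^\circ$ is the simplicial sphere of radius $n$ and $p\colon S_n\to \ZI/2\ZI$ is an arbitrary map on the $2$-skeleton, then $B_n$ extends to a ball $B_{n+1}$ in which every face of $S_n$ has the parity prescribed by $p$. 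Its proof is a three-case analysis according to whether a face $f$ of $S_n$ meets the topological boundary $\del B_n$ in a single vertex, a single edge, or a pair of adjacent edges. In the first case the paper exhibits the exact subgraph of the link $L_x$ forced by a $2$-triangle through $f$ and checks that this subgraph still embeds in the Moebius--Kantor graph in ways realizing either rank for the root of $f$; in the second case it verifies that for an edge $[x,y]=g\cap\del B_n$ the ranks of the two roots $\beta_x$ and $\beta_y$ can be chosen independently --- this is where your worry about boundary vertices sharing an edge is actually resolved; the third case is immediate because $B_n$ then forces only one edge of $L_x$. Without this finite combinatorial verification the theorem is not established: a priori the partial links accumulated along $\del B_n$ could rigidify the embedding into the Moebius--Kantor graph and force the parity of some face of $S_n$. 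Note also that the independence you need is not only between completions at distinct vertices but between the three roots of a single $2$-triangle, since the parity of a face counts rank-$2$ sides over all three vertices of its centerpiece; the paper's case analysis is organized around the face, not the vertex, for exactly this reason.

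Two further remarks. Your bookkeeping should make explicit that the parity of a face of $S_n$ is only decided at stage $n+1$ (a $2$-triangle centered on a face of the sphere protrudes into the next layer), which is how the paper's lemma is phrased. For the final clause, the paper offers no more detail than that the theorem ``follows easily'' from the lemma, and your counting is in the right spirit; but as written the inference ``each isomorphism type is realized by at most countably many $p$'' does not follow from uniform discreteness of automorphism groups alone, since the complexes $X_p$ are distinct objects and an isomorphism $X_p\to X_{p'}$ need not relate $p$ and $p'$ by a self-map of a fixed index set. One should argue, for instance, that all the $X_p$ contain a common seed ball, that an isomorphism is determined by its restriction to that ball together with its image (a countable amount of data), and then run the counting over the uncountable family of prescriptions; this is standard but deserves a sentence of care.
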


The free construction is (as in the above references) by induction, starting with a ball $B_1$ in a complex of rank \sq\ and extending successively the balls $B_1\subset B_2\subset\cdots$, and setting $X:=\bigcup B_n$, where we have to show that the parity can be chosen freely. This is done in the following lemma, from which the theorem follows easily.

\begin{lemma}
Let $B_n$ be a ball of radius $n$ in a complex of rank \sq, and let $S_n:=B_n\setminus  (B_{n-1})^\circ$ denote the simplicial sphere of radius $n$. Let $p\colon S_n\to\ZI/2\ZI$ be a map defined on the 2-skeleton. Then there exists a ball $B_{n+1}$ in a complex of rank \sq, containing $B_n$, and such that the parity of the faces of $S_n$ in $B_{n+1}$ is given by $p$.
\end{lemma}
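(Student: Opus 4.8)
The goal is a prescription theorem for the parity on the sphere $S_n$: given any $p\colon S_n\to\ZI/2\ZI$ on the 2-faces, build an extension $B_{n+1}\supset B_n$ realizing $p$. The natural strategy is face-by-face: enumerate the faces $t$ of $S_n$ and, for each one, attach the portion of $B_{n+1}$ needed to complete the link structure around $t$ (and around the edges and vertices of $S_n$) so that $t$ acquires the prescribed parity $p(t)$, while not disturbing the parities already fixed in $B_{n-1}$ or on faces of $S_n$ treated earlier. The mechanism that gives us a genuine \emph{choice} of parity is exactly Lemma~\ref{L - rank parity lemma}: by Definition~\ref{D - rank parity}, the parity of $t$ is the $\ZI/2\ZI$-count of rank-$2$ sides among the six outer sides of a $2$-triangle $T$ with centerpiece $t$, and a branching permutation $T\to T'$ flips one side between rank $\frac 32$ and rank $2$, hence toggles the parity. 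So the task reduces to showing that, as we grow the link of each vertex of $S_n$ out to the full Moebius--Kantor graph, we retain enough freedom in how the new edges and $2$-triangles are glued to flip each outer side's rank independently, without creating forced constraints.

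\textbf{Key steps.}
First I would set up the local picture: for a vertex $x\in S_n$, the part of its link $L_x$ already present in $B_n$ is a proper subgraph of the Moebius--Kantor graph, and extending to $B_{n+1}$ means completing $L_x$ to the full graph by adding link vertices (outgoing edges of $x$) and link edges (new faces). Second, I would recall from Lemma~\ref{L - rank parity lemma}'s proof the basic local freedom: an isometric embedding $[0,\tfrac{2\pi}{3}]\inj L_x$ whose $\alpha(0)$ is a link vertex has \emph{exactly two} extensions to a root $[0,\pi]\inj L_x$, one of rank $\frac32$ and one of rank $2$, and which one is realized is dictated by how the far end $\alpha(\pi)$ is connected in $L_x$. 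The rank of a side of the $2$-triangle $T$ at $t$ is therefore governed by a local gluing choice in the links at the vertices of $t$, and the prescription of $p(t)$ amounts to selecting, among the admissible completions of those links, one that produces the required parity. Third, I would verify that completing the Moebius--Kantor graph can be done one vertex at a time so that the choices are independent across distinct faces of $S_n$: since each face contributes its own (disjoint, up to the fixed inner triangle) outer sides, and distinct faces of $S_n$ involve distinct new link-edges at the relevant vertices, adjusting the rank of one face's side does not retroactively change a previously prescribed face. Finally, I would check that the resulting $B_{n+1}$ is indeed a ball in \emph{some} complex of rank $\frac74$ — i.e.\ every completed link is isomorphic to the Moebius--Kantor graph and the gluings are consistent — and that it contains $B_n$ isometrically.

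\textbf{The main obstacle.}
The delicate point is \emph{simultaneous} realizability, i.e.\ showing that the local choices needed to set $p(t)$ for all faces $t$ of $S_n$ are mutually compatible and do not over-constrain the completion of any single link. A link vertex of $L_x$ is shared by several incident faces and several potential $2$-triangles, so in principle the gluing choice that flips the rank of one outer side could be forced to also affect a side belonging to a neighboring face; one must argue that the Moebius--Kantor graph has enough homogeneity (enough non-closed partial embeddings admitting both a rank-$\frac32$ and a rank-$2$ completion) that these constraints decouple. I expect this to come down to a combinatorial check on the Moebius--Kantor graph: that for each partial configuration arising on $S_n$ there exist two genuinely distinct completions realizing the two parities, with the \emph{other} sides held fixed. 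Once this independence is established, the face-by-face induction closes, the union $X=\bigcup B_n$ is a simply connected complex of rank $\frac74$ with the prescribed parity, and the uncountability in Theorem~\ref{T - Free constructions} follows from the $2^{\aleph_0}$ choices of parity data.
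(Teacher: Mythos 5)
Your overall strategy coincides with the paper's: extend $B_n$ by completing the links of the vertices of $S_n$ to Moebius--Kantor graphs, and exploit the dichotomy from the proof of Lemma~\ref{L - rank parity lemma} (every isometric embedding $[0,\tfrac{2\pi}{3}]\hookrightarrow L_x$ starting at a link vertex has exactly two extensions to a root, one of rank $\tfrac32$ and one of rank $2$) to toggle the rank of each side of a $2$-triangle through a given face. The problem is that the step you yourself isolate as ``the main obstacle'' --- that the local gluing choices attached to distinct faces of $S_n$ decouple and do not over-constrain any single link --- is the entire mathematical content of the lemma, and you leave it as an expectation (``I expect this to come down to a combinatorial check'') rather than carrying it out. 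As written, the proposal is a correct outline of the strategy but does not prove the statement.

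The paper closes exactly this gap by a trichotomy on $f\cap\partial B_n$ for a face $f$ of $S_n$: the intersection is a single vertex, a single edge, or a pair of adjacent edges. In the first case, fixing the rank of the root of a $2$-triangle through $f$ at that vertex $x$ pins down only an explicit small subgraph of $L_x$ (drawn in the paper), and one then checks, via Lemma~\ref{L - rank parity lemma}, that for each adjacent face $g$ meeting $\partial B_n$ in an edge the corresponding root $\beta$ still has both terminal choices $\beta(\pi)=v$ or $\beta(\pi)=w$ available, so its rank --- hence the parity of $g$ --- remains a free and independent choice; in the second case the two roots $\beta_x,\beta_y$ at the endpoints of the edge can be chosen independently; in the third case $B_n$ determines only a single edge of $L_x$, so there is no constraint at all. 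Without this explicit bookkeeping of which portion of each link is already forced by earlier choices, the independence claim is unjustified and the face-by-face induction does not close. (A minor separate slip: the parity of a face is the parity of the number of rank-$2$ sides of a $2$-triangle having that face as centerpiece --- three sides, each contributing one rank value via the pair of roots $\alpha(t)$, $\alpha(\pi-t)$ --- not a count over ``six outer sides,'' which would always be even.)
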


\begin{proof}
Let $f$ be a face in $S_n$. Then $f\cap \del B_n$ is either a point, an edge, or a pair of adjacent edges (where $\del B_n$ denotes the topological boundary).

If $f\cap \del B_n$ is a point $x$, then $f$ is adjacent to 4 faces in $S_n$ whose intersections with $\del B_n$ is an edge. Let $T$ be a 2-triangle containing the face $f$ and let $\alpha$ be a root corresponding to $T$ at $x$. We can choose to embed $\alpha$ in the Moebius--Kantor graph $L_x$ in such a way that its rank is $\frac 3 2$ or 2, and therefore can freely decide the parity of $f$ in the construction, which can be chosen to be $p(f)$. Furthermore, this operation only determines the following subgraph of  $L_x$.

\begin{center}
\begin{tikzpicture}[shift ={(0.0,0.0)},scale = 2.0]
\tikzstyle{every node}=[font=\small]

\coordinate (v1) at (0.0,0.0);
\coordinate (v2) at (1,0);
\coordinate (v3) at (-0.5,0.5);
\coordinate (v4) at (0.1,.9);
\coordinate (v5) at (.9,.9);
\coordinate (v6) at (1.5,.5);
\coordinate (v7) at (0,1.1);
\coordinate (v8) at (1,1.1);
\coordinate (w3) at (-0.5,-0.5);
\coordinate (w4) at (0.1,-.9);
\coordinate (w5) at (.9,-.9);
\coordinate (w6) at (1.5,-.5);
\coordinate (w7) at (0,-1.1);
\coordinate (w8) at (1,-1.1);

\draw (v2) node[above] {$u$} ;
\draw (v4) node[above] {$v$};
\draw (v7) node[above] {$w$} ;

\path (v1) edge[solid,thin,color=black,-] node[above]  {$f$} (v2);
\draw[solid,thin,color=black,-] (v1) -- (v3);
\draw[solid,thin,color=black,-] (v3) -- (v4);
\draw[solid,thin,color=black,-] (v4) -- (v5);
\draw[solid,thin,color=black,-] (v5) -- (v6);
\draw[solid,thin,color=black,-] (v2) -- (v6);
\draw[solid,thin,color=black,-] (v3) -- (v7);
\draw[solid,thin,color=black,-] (v7) -- (v8);
\draw[solid,thin,color=black,-] (v8) -- (v6);
\draw[solid,thin,color=black,-] (v1) -- (w3);
\draw[solid,thin,color=black,-] (w3) -- (w4);
\draw[solid,thin,color=black,-] (w4) -- (w5);
\draw[solid,thin,color=black,-] (w5) -- (w6);
\draw[solid,thin,color=black,-] (v2) -- (w6);
\draw[solid,thin,color=black,-] (w3) -- (w7);
\draw[solid,thin,color=black,-] (w7) -- (w8);
\draw[solid,thin,color=black,-] (w8) -- (w6);

\end{tikzpicture}

\end{center}

In this graph the edge labelled $f$ corresponds to the centerpiece of the root $\alpha$, which extends on both side, in a direction depending on its rank. The origin and extremity of $\alpha
$ belong to the two faces in $T$ adjacent to $f$ and intersecting the boundary. Let $g$ be one of these two faces and $\beta$ be a root in $L_x$ corresponding to $g$. Note that $g$ is such that $g\cap \del B_n$ is a single edge. We have to show that the parity of $g$ can be chosen according to $p$. By symmetry, one may assume that $\beta(0) = u$ and $\beta(\pi)=v$ or $w$. Lemma \ref{L - rank parity lemma} ensures that the rank of $\beta$ is permuted according to the choice $\beta(\pi)=v$ or $\beta(\pi)=w$. This shows that one can freely choose the rank  parity of $g$ by choosing an appropriate embedding of the above graph in the Moebius--Kantor graph $L_x$. 

This argument shows that for any face $g$ such that $g\cap \del B_n$ is a single edge $[x,y]$, the rank of both roots $\beta_x$ and $\beta_y$ corresponding to $x$ and $y$ in a 2-triangle containing $g$ can be chosen independently. In particular, the parity of $g$ can be chosen to be $p(g)$.  

In the last case, $f\cap \del B_n$ is a pair of adjacent edges intersecting at point $x$. In that case the ball $B_n$ determines a single edge of $L_x$, and therefore the parity of $f$ can be freely chosen.
\end{proof}

\begin{remark}
1) It seems clear from the proof that the parity alone will not determine the isomorphism class of a complex of rank \sq, and that there ought to exist uncountably many complexes with given parity (for example, uncountably many even/odd complexes of rank 7/4). Providing a formal proof of this assertion however requires a more refined invariant that distinguishes between complexes with a given parity map. We shall not pursue these investigations further. 

2) It follows from Theorem \ref{T - Free constructions} that the equivalence in Lemma \ref{L - e finite} fails without the assumption that the isometry group has compact quotient. 
 
\end{remark}

 The following should be compared to the results of \cite{tits1988spheres}. 
 
 \begin{corollary}
 There exists at least 174163 isomorphism types of spheres of radius 2 in complexes of rank \sq.
 \end{corollary}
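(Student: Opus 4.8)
The plan is to derive the bound from the free prescription theorem together with the fact that the parity map is a simplicial invariant, reducing the whole statement to a finite orbit count. First I would fix once and for all the underlying \emph{undecorated} simplicial sphere $\Sigma$ of radius $2$ about a vertex $x$ in a complex of rank \sq, so that the link at $x$ is the Moebius--Kantor graph. Let $F$ be the set of faces of $\Sigma$ whose parity is both determined by the radius-$2$ data and can be prescribed independently, as furnished by the prescription lemma (the last lemma of \ts\ref{S- Prescription}); concretely these are the faces $f$ with $f\cap\partial B$ an edge, for which Lemma \ref{L - rank parity lemma} guarantees that the two bounding roots, hence the parity of $f$, may be chosen freely. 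By that lemma every function $p\colon F\to\ZI/2\ZI$ is realised as the parity map of an actual radius-$2$ sphere that is isomorphic to $\Sigma$ as an undecorated complex.

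Next I would turn the count into an orbit count. Since simplicial isomorphisms preserve parity (the invariance lemma of \ts\ref{S - rank parity}), two decorated spheres $(\Sigma,p)$ and $(\Sigma,p')$ built on the same undecorated $\Sigma$ are isomorphic precisely when $p'=p\circ\sigma$ for some automorphism $\sigma$ of $\Sigma$ fixing $x$; write $G$ for the group of such automorphisms, acting on the finite set $F$. Consequently the number of pairwise non-isomorphic decorated spheres equals the number of $G$-orbits of $(\ZI/2\ZI)^F$, which by Burnside's lemma is
\[
N=\frac{1}{|G|}\sum_{g\in G}2^{o(g)},
\]
where $o(g)$ denotes the number of orbits of $g$ on $F$. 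Identifying $G$ (whose action is governed by the automorphisms of the Moebius--Kantor link, of order $96$) and the exact cycle structure of its action on $F$, and then evaluating the sum, produces the stated number.

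The main obstacle is this last, essentially computational, step: one must pin down the automorphism group $G$ of the undecorated radius-$2$ sphere and the precise cycle decomposition of its action on the face set $F$, which is a finite but sizeable bookkeeping problem best handled by machine. To keep the conclusion a clean \emph{lower} bound one may quotient by any group containing the true symmetry group -- over-counting symmetries can only merge orbits and hence only decrease $N$ -- and one notes moreover that the parity need not be a complete invariant, so that the true number of isomorphism types of radius-$2$ spheres can only exceed $N$; this is why the statement is phrased as ``at least'' $174163$. A secondary point to check carefully, via the three cases in the prescription lemma, is that the faces in $F$ genuinely carry independent parities, so that all $2^{|F|}$ decorations are realised and no hidden relations shrink the count.
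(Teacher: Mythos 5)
Your overall strategy --- realize every parity assignment via the prescription theorem, then quotient by symmetries because parity is an isomorphism invariant --- is the paper's strategy. But there is a concrete gap: you never pin down the finite set $F$ on which the parities live, nor the group $G$ acting on it, and your tentative description of $F$ (the faces $f$ with $f\cap\del B$ an edge) is not the right one for this corollary. The point of the paper's argument is that a sphere of radius $2$ determines the parity of every face of the ball $B=B(x,1)$ of radius $1$ (the $2$-triangle having such a face as centerpiece lies inside the ball of radius $2$), and the prescription lemma allows one to extend $B$ to a ball of radius $2$ realizing \emph{any} parity map $p$ on the full $2$-skeleton of $B$. The faces of $B$ are in bijection with the edges of the link $L_x$, i.e., with the $24$ edges of the Moebius--Kantor graph, and the relevant group is $\Aut(B)\simeq\Aut(G)$ of order $96$, where $G$ is the Moebius--Kantor graph. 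These are exactly the two numbers your ``computational step'' leaves open, and without them no numerical bound comes out of your argument.

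A second point: Burnside's lemma is unnecessary here, and invoking it forces you into the cycle-structure computation you defer to a machine. Since the number of orbits of a finite group acting on a finite set $X$ is always at least $|X|/|G|$, the trivial estimate $2^{24}/96=174762.6\ldots$ already exceeds the stated $174163$; Burnside could only improve the count. If you do set up the orbit count, quotient by $\Aut(B(x,1))$ rather than by the automorphism group of the undecorated radius-$2$ sphere: the invariance of parity tells you that an abstract isomorphism $B_p\to B_{p'}$ restricts to an automorphism $\theta$ of the radius-$1$ ball with $p'=p\circ\theta$, and that is the only identification one needs (over-quotienting, as you note, is harmless for a lower bound, but here it is also avoidable).
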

   
   \begin{proof}
   A sphere of radius 2 determine the parity map on the ball $B$ of radius 1. By Theorem \ref{T - Free constructions}, for every map $p\colon B\to \ZI/2\ZI$ defined on the 2-skeleton, there exists a ball $B_p$ of radius 2 extending $B$, such that the parity of the faces of the ball of radius 1 in $B_p$ is given by $p$. Since the parity is an invariant, if $B_p$ and $B_{p'}$ are abstractly isomorphic, then there exists an automorphism $\theta\colon B\to B$ such that $p'=p\circ \theta$. Thus, the number of spheres of radius 2 is at least the number of $p$'s modulo the action of $\Aut(B)\simeq \Aut(G)$ where $G$ is the Moebius--Kantor graph. This gives at least $2^{24}/96=174762.6$ isomorphism types.    
   \end{proof}

\section{Generic constructions}\label{S - generic}

In \cite{E1,E2} we studied Euclidean buildings of type $\tilde A_2$ from a dynamical point of view, motivated by 
some questions in orbit equivalence theory. In view of the result in \ts\ref{S- Prescription}, it seems obvious that these results will carry over to complexes of rank \sq.  In the present section we mention an analog of  \cite[Theorem 5]{E1}, which can be stated as follows:

\begin{theorem}
A generic  complex of rank \sq\ has trivial automorphism group.
\end{theorem}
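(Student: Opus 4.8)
The statement asserts that the set of complexes with trivial automorphism group is \emph{generic}, i.e. comeager (a dense $G_\delta$) in the natural topology on the space $\mathcal{E}$ of based isomorphism classes of simply connected complexes of rank \sq. The plan is to establish this via the Baire category theorem, exactly as for the space of triangle buildings in \cite{E1,E2}. First I would record that $\mathcal{E}$, topologized by local agreement of balls about the base vertex $x_0$ (two complexes being close when their balls $B(x_0,n)$ agree for large $n$), is a compact metrizable space, hence a Baire space. Here one uses that, since every link is the fixed finite Moebius--Kantor graph, there are only finitely many isomorphism types of balls of each radius, so the topology is that of an inverse limit of finite sets and compactness is automatic.

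The main structural input is \emph{local rigidity}: there is a universal radius $R_0$ such that any automorphism of a complex of rank \sq\ fixing $B(x_0,R_0)$ pointwise is the identity. This is the local-to-global principle for these complexes (an isometry fixing a vertex together with its link fixes the adjacent faces, hence, by gallery connectivity, the whole complex), and it underlies the uniform discreteness of $\Aut(X)$ noted in the introduction. Consequently every automorphism $g$ is determined by its \emph{germ}, the induced isometry $B(x_0,R_0)\to B(g(x_0),R_0)$, and if $g\neq\id$ this germ is nontrivial. Since balls of radius $R_0$ have finitely many isomorphism types, there are only finitely many combinatorial \emph{germ types} $\tau$, and I would write
\[
\{X\in\mathcal{E}\mid \Aut(X)\neq 1\}=\bigcup_{\tau\neq\mathrm{triv}}F_\tau,\qquad F_\tau:=\{X\mid X\text{ admits an automorphism of germ type }\tau\}.
\]
Each $F_\tau$ is closed: if $X_k\to X$ with automorphisms $g_k$ of germ type $\tau$, the germs stabilize as the balls about $x_0$ stabilize, and since an automorphism is determined by its germ the corresponding local isometry extends to a global automorphism of the limit $X$.

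It then suffices to show each $F_\tau$ (for $\tau$ nontrivial) is nowhere dense, and here the free prescription theorem is essential. Given a basic open set $V=\{X\mid B(x_0,N)\cong B_N\}$ with $N\geq R_0$, I would use Theorem \ref{T - Free constructions} to extend $B_N$ to a ball $B_{N+1}$ in which the parity map on the sphere $S_{N+1}$ is prescribed to be \emph{asymmetric} with respect to $\tau$: since $\tau$ is nontrivial, any automorphism $g$ of germ type $\tau$ moves some face $f$ lying at radius $\leq N+1$, and the prescription lemma grants the freedom to assign $p(f)\neq p(g\cdot f)$ on the boundary faces of $S_{N+1}$. As the parity map is an isomorphism invariant, no complex refining this $B_{N+1}$ can carry an automorphism of germ type $\tau$; thus the nonempty open set $V'=\{X\mid B(x_0,N+1)\cong B_{N+1}\}\subseteq V$ is disjoint from $F_\tau$, so $F_\tau$ is nowhere dense. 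The Baire category theorem then gives that $\bigcup_\tau F_\tau$ is meager, whence its complement---the complexes with trivial automorphism group---is comeager, which is the assertion.

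The step I expect to be the main obstacle is the last one: verifying that a nontrivial germ type $\tau$ can always be obstructed by a parity choice. One must check, uniformly in $\tau$, that a face $f$ moved by a putative $g$ can be taken to be a boundary face of $S_{N+1}$ on which the prescription lemma genuinely provides independent control of the parity (the lemma distinguishes whether $f\cap\del B_N$ is a point, an edge, or a pair of edges), and one must treat separately the case where $g$ fixes $x_0$ but acts nontrivially on the link and the case where $g$ displaces $x_0$. Matching the combinatorics of the germ's action on the radius-$(N+1)$ faces with the available parity freedom is the delicate point; everything else is a routine application of Baire category as in \cite{E1,E2}.
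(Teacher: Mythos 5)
Your overall strategy --- work in the compact space of pointed complexes of rank \sq, invoke Baire category, and use the parity prescription theorem (Theorem \ref{T - Free constructions}) to kill automorphisms --- is the one the paper intends (the paper itself only sketches the argument, deferring the details to the analogous proof for triangle buildings in \cite{E1}). But your decomposition has a genuine gap. You write the non-rigid locus as a \emph{finite} union $\bigcup_\tau F_\tau$ over abstract germ types and claim each $F_\tau$ is closed. This fails: an automorphism of germ type $\tau$ can displace the base vertex arbitrarily far, so for a sequence $X_k\to X$ with automorphisms $g_k$ of type $\tau$ but $d(x_0,g_k(x_0))\to\infty$ there is no way to extract a limiting automorphism of $X$, and $F_\tau$ need not be closed. (If your decomposition did work, the set of complexes with nontrivial automorphism group would be a finite union of closed nowhere dense sets, hence itself closed and nowhere dense --- too strong, since rigid complexes arise as limits of non-rigid ones with automorphisms of growing displacement.) The correct decomposition, as in \cite{E1}, is the \emph{countable} one $\bigcup_n A_n$ with $A_n=\{X\mid \exists\, g\in\Aut(X),\ g\neq 1,\ d(x_0,g(x_0))\leq n\}$; closedness of each $A_n$ then follows from a diagonal extraction applied to the restrictions $g_k|_{B(x_0,m)}$, which take only finitely many values for each $m$.

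The second, related, gap is in the obstruction step. Prescribing $p(f)\neq p(g\cdot f)$ on $S_{N+1}$ presupposes that you know how $g$ acts on the faces of $S_{N+1}$, but the germ type alone does not determine this when $g(x_0)$ lies outside the ball you have already fixed. This is precisely what the ``shades of grey'' in the paper's figure (borrowed from \cite[\ts 6]{E1}) are for: instead of prescribing individual parities against a specific permutation, one prescribes the \emph{density} of odd faces in each successive annulus around $x_0$. An automorphism, whatever its displacement, carries annuli around $x_0$ to annuli around $g(x_0)$ and must therefore preserve this density profile up to a bounded shift; a suitably varying (monotone or aperiodic) choice of densities rules out all displacing automorphisms simultaneously, while the pattern imposed on the first annulus kills the isotropy (which is finite, as the paper's Remark notes, so only finitely many local symmetries need to be broken there). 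Your proof needs this statistical device, or some substitute for it, to handle automorphisms of large or unbounded displacement; with the countable decomposition above and the density argument in place, the rest of your outline goes through.
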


The notion of genericity used in this result (and in \cite{E1}) is topological, in the space of Baire, meaning that this holds for a dense $G_\delta$ set in the space of pointed complexes of rank \sq.

Rather than giving a formal proof of this result (which would significantly overlap with \cite{E1}), let us reproduce the figure from \cite[\ts 6]{E1} since it is helpful to understand how the generic triviality of the automorphism group occurs.

\begin{figure}[H]
\includegraphics[width=7cm]{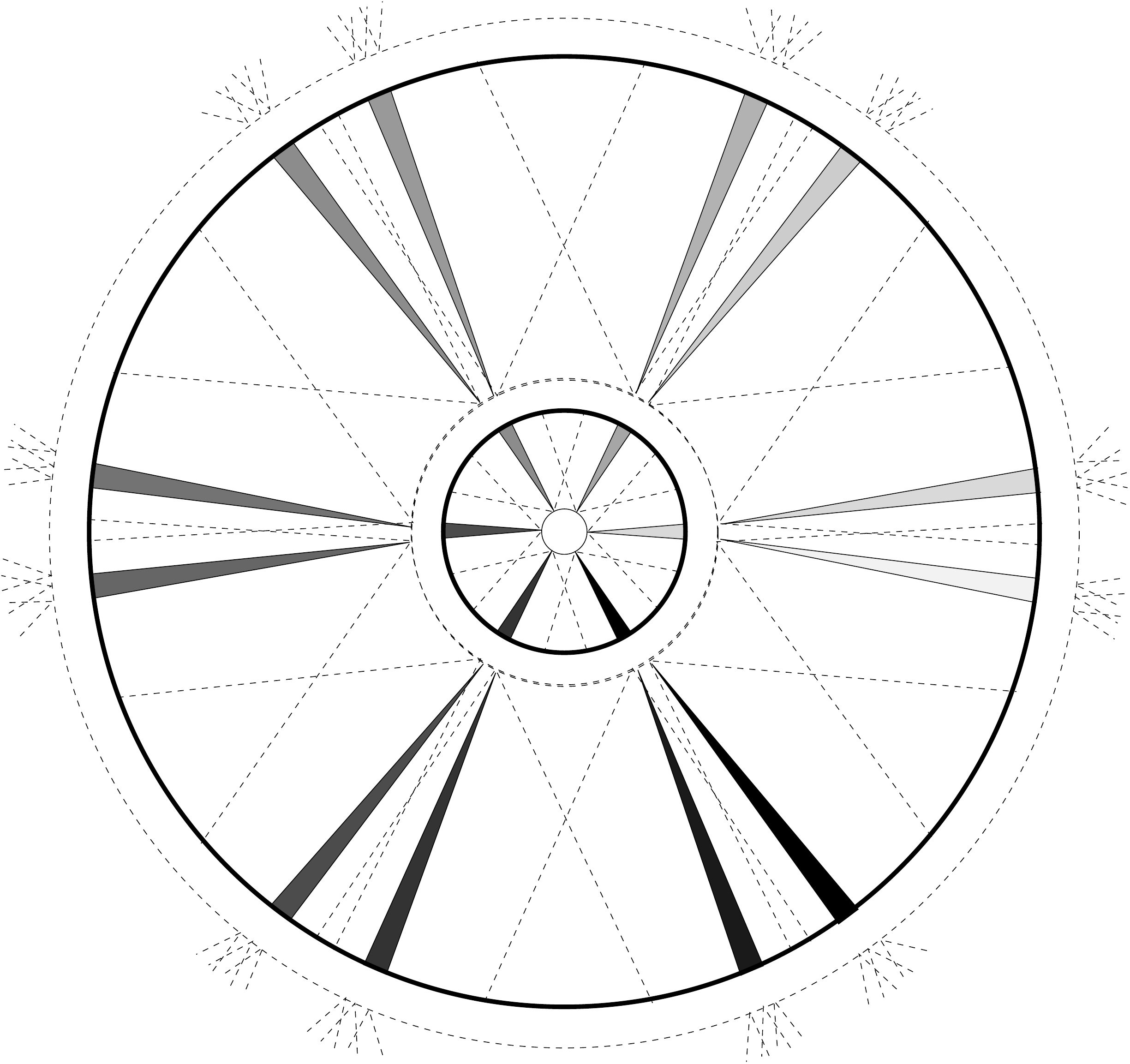}
\end{figure}

This figure symbolizes a complex of rank \sq\ (or a building in the situation of \cite{E1}). The central (white) disk represents a ball of ``small''   (i.e., arbitrary large, but fixed) radius, which corresponds to fixing a neighborhood in the space of pointed complexes of rank \sq. Above this ball are larger balls which we can control to trivialize  automorphisms, where the shades of grey account for the ``density'' of odd faces in the given portion of the complex. With some details, that need to be checked (as we did in \cite{E1}, in the case of buildings), this proves that the automorphism group of the generic (in the topological sense) space of rank \sq\ is trivial.

\begin{remark}
1) This argument is quite robust. It shows that the automorphism group of the spaces is generically trivial as soon as one can establish a free prescription theorem in the style of Ronan \cite{ronan1986construction} (or at least a partial prescription theorem which offers enough if not total freedom, as in \cite{E1} in the case $q\geq 3$), of a local invariant of isomorphism. The parity is such a local invariant of isomorphism, by Theorem \ref{T - Free constructions}. 

2) In the case of complexes of rank \sq, it is easy to check that the isotropy groups are finite, and therefore the full use of shadings is not required. Namely, one may trivialize the isotropy groups by using appropriate shades of grey in the first wreath, and then confine oneself to prescribe even wreaths, followed by odd ones in alternance.   

3)  The parity allows us to distinguish universal covers up to isometry, but due to its local nature, there seems to be no direct way to make use of it  for quasi-isometries.  It is of course natural to wonder what can be said about the relation of quasi-isometry in the space of complexes of rank \sq.
 
4) One motivation for studying the space of Euclidean buildings was, in the theory of orbit equivalence, to give new constructions of probability measure preserving standard equivalence relations with the property T of Kazhdan. One difficulty is the construction of diffuse invariant measures on this space. In \cite{surgery} it is shown that the (similarly defined) space of complexes of rank \sq, contrary to what we currently know in the case of Euclidean buildings,  admits diffuse invariant measures, albeit ones with amenable support. It would be interesting to study quasiperiodic spaces of rank \sq\ in more details, using the techniques of \cite{E1,E2} for example. 
\end{remark}

\section{Appendix}

In this appendix we compute the parity map for the remaining eight complexes of rank \sq\ from \cite{rd}, namely $V_1$, $V_3$, $V_5$,  $V_2^i$ for $i=1,\ldots,4$, and $V_4^2$.

\begin{table}[H]
\centering
\begin{tabular}{c@{\hspace{.1cm}} | c@{\hspace{.1cm}}  c@{\hspace{.1cm}} c@{\hspace{.1cm}}c@{\hspace{.1cm}}  c@{\hspace{.1cm}}  c@{\hspace{.1cm}}c@{\hspace{.1cm}} c@{\hspace{.1cm}}}
$V_1$& $[1,1,2]$&$[1,3,4]$&$[2,5,6]$&$[2,7,8]$&$[3,5,7]$&$[3,6,5]$&$[4,6,8]$&$[4,8,7]$\\
\hline
Parity&0&0&1&1&0&0&0&0 
\end{tabular}
\end{table}

\begin{table}[H]
\centering
\begin{tabular}{c@{\hspace{.1cm}} | c@{\hspace{.1cm}}  c@{\hspace{.1cm}} c@{\hspace{.1cm}}c@{\hspace{.1cm}}  c@{\hspace{.1cm}}  c@{\hspace{.1cm}}c@{\hspace{.1cm}} c@{\hspace{.1cm}}}
$V_2^1$& $[1,1,3]$&$[2,2,3]$&$[1,4,5]$&$[2,7,8]$&$[3,5,7]$&$[4,6,8]$&$[4,7,6]$&$[5,8,6]$\\
\hline
Parity&0&0&1&1&0&0&0&0 
\end{tabular}
\end{table}

\begin{table}[H]
\centering
\begin{tabular}{c@{\hspace{.1cm}} | c@{\hspace{.1cm}}  c@{\hspace{.1cm}} c@{\hspace{.1cm}}c@{\hspace{.1cm}}  c@{\hspace{.1cm}}  c@{\hspace{.1cm}}c@{\hspace{.1cm}} c@{\hspace{.1cm}}}
$V_2^2$& $[1,1,3]$&$[2,2,4]$&$[3,7,4]$&$[1,4,6]$&$[2,5,3]$&$[5,7,8]$&$[5,8,6]$&$[6,8,7]$\\
\hline
Parity&1&1&0&1&1&0&0&0 
\end{tabular}
\end{table}

\begin{table}[H]
\centering
\begin{tabular}{c@{\hspace{.1cm}} | c@{\hspace{.1cm}}  c@{\hspace{.1cm}} c@{\hspace{.1cm}}c@{\hspace{.1cm}}  c@{\hspace{.1cm}}  c@{\hspace{.1cm}}c@{\hspace{.1cm}} c@{\hspace{.1cm}}}
$V_2^3$& $[1,1,3]$&$[2,2,4]$&$[1,5,2]$&$[3,6,4]$&$[3,7,6]$&$[4,6,8]$&$[5,7,8]$&$[5,8,7]$\\
\hline
Parity&0&0&0&1&1&1&0&0 
\end{tabular}
\end{table}

\begin{table}[H]
\centering
\begin{tabular}{c@{\hspace{.1cm}} | c@{\hspace{.1cm}}  c@{\hspace{.1cm}} c@{\hspace{.1cm}}c@{\hspace{.1cm}}  c@{\hspace{.1cm}}  c@{\hspace{.1cm}}c@{\hspace{.1cm}} c@{\hspace{.1cm}}}
$V_2^4$& $[1,1,3]$&$[2,2,4]$&$[1,5,2]$&$[3,6,5]$&$[3,7,8]$&$[4,5,8]$&$[4,6,7]$&$[6,8,7]$\\
\hline
Parity&0&0&0&1&1&1&1&0 
\end{tabular}
\end{table}

\begin{table}[H]
\centering
\begin{tabular}{c@{\hspace{.1cm}} | c@{\hspace{.1cm}}  c@{\hspace{.1cm}} c@{\hspace{.1cm}}c@{\hspace{.1cm}}  c@{\hspace{.1cm}}  c@{\hspace{.1cm}}c@{\hspace{.1cm}} c@{\hspace{.1cm}}}
$V_3$& $[1,1,4]$&$[2,2,4]$&$[3,3,5]$&$[1,3,6]$&$[2,5,7]$&$[4,7,8]$&$[5,8,6]$&$[6,8,7]$\\
\hline
Parity&0&0&0&0&1&1&1&0 
\end{tabular}
\end{table}

\begin{table}[H]
\centering
\begin{tabular}{c@{\hspace{.1cm}} | c@{\hspace{.1cm}}  c@{\hspace{.1cm}} c@{\hspace{.1cm}}c@{\hspace{.1cm}}  c@{\hspace{.1cm}}  c@{\hspace{.1cm}}c@{\hspace{.1cm}} c@{\hspace{.1cm}}}
$V_4^2$& $[1,1,5]$&$[2,2,5]$&$[3,3,6]$&$[4,4,7]$&$[1,3,8]$&$[2,7,6]$&$[4,8,6]$&$[5,8,7]$\\
\hline
Parity&0&0&1&1&0&1&1&1
\end{tabular}
\end{table}

\begin{table}[H]
\centering
\begin{tabular}{c@{\hspace{.1cm}} | c@{\hspace{.1cm}}  c@{\hspace{.1cm}} c@{\hspace{.1cm}}c@{\hspace{.1cm}}  c@{\hspace{.1cm}}  c@{\hspace{.1cm}}c@{\hspace{.1cm}} c@{\hspace{.1cm}}}
$V_5$&  $[1,1,2]$&$[3,3,2]$&$[4,4,2]$&$[5,5,6]$&$[7,7,8]$&$[1,8,6]$&$[3,6,7]$&$[5,8,4]$\\
\hline
Parity&0&0&0&1&1&0&0&0 
\end{tabular}
\end{table}

\begin{remark}
 The complex $V_0$ is therefore the unique complex in the list of \cite{rd}  
such that every face is the centerpiece of a 2-triangle whose three sides are of rank 2.

\end{remark}

\end{document}